\title{Reverse Mathematics of Matroids}
\titlerunning{Reverse Mathematics of Matroids}
\author{
Jeffry L. Hirst\inst{1}
\and
Carl Mummert\inst{2}
}
\institute{
Appalachian State University, Boone, NC 28608, USA,\\
\email{hirstjl@appstate.edu},\\
WWW home page:  \texttt{http://mathsci.appstate.edu/\homedir{}jlh/ }
\and
Marshall University, Huntington, WV 25755, USA,\\
\email{mummertc@marshall.edu},\\
WWW home page:  \texttt{http://m6c.org/w/}
}
\authorrunning{Jeffry L. Hirst and Carl Mummert}
\newenvironment{enumlist}{\begin{enumerate}[label=\upshape{(\arabic*)}]}{\end{enumerate}}
\spnewtheorem{coro}[theorem]{Corollary}{\bfseries}{\itshape}  
\spnewtheorem{lem}[theorem]{Lemma}{\bfseries}{\itshape}
\spnewtheorem{defn}[theorem]{Definition}{\bfseries}{}            
\newcommand{\nat}{\mathbb N}  
\newcommand{\rca}{{\sf RCA}_0}
\newcommand{\aca}{{\sf ACA}_0}
\newcommand{\mfin}{M^{<\nat}}
\newcommand{\zej}{{\bf 0}^\prime}
\newcommand{\clo}{{\text{cl}}}
\begin{document}

\maketitle

\begin{abstract}
Matroids generalize the familiar notion of linear dependence  from linear algebra.
Following a brief discussion of founding work in computability and matroids,
we use the techniques of reverse mathematics to determine the logical
strength of some basis theorems for matroids and enumerated matroids.
Next, using Weihrauch reducibility, we relate the basis results to 
combinatorial choice principles and statements about vector spaces.
Finally, we formalize some of the Weihrauch reductions to extract related
reverse mathematics results.  In particular, we show that the existence of
bases for vector spaces of bounded dimension is equivalent to the induction scheme
for $\Sigma^0_2$ formulas.\\[1em]
\noindent \textbf{Keywords:} Reverse mathematics, matroid, induction, graph, connected component\\[1em]
\textbf{MSC Subject Class (2000):} 03B30; 03F35; 05B35
 \end{abstract}

 The study of computable and computably enumerable matroids links the work in 
 this paper to the theme of this volume.  The following incomplete survey establishes a
 framework for this connection and provides a few pointers into the substantial literature on
 computability and matroids.
 
 In a seminal paper on  
 computable
 and c.e.~vector spaces,  Metakides and Nerode~\cite{metakides} 
  defined a vector space $V_\infty$, the $\aleph_0$-dimensional vector space
 over a countable computable field $F$ consisting of $\omega$-sequences of
 elements of $F$ with finite support, with point-wise operations.   The lattice of c.e.~subspaces 
 of $V_\infty$ is denoted $\mathcal L (V_\infty )$.  A vector space $V$
 over a computable field $F$ is \emph{c.e.~presented} if it has an effective enumeration of
 the vectors, partial recursive addition and scalar multiplication operations, and a
 c.e.~congruence relation $\equiv$ such that the quotient $V/\mathbin{\equiv}$ is a vector space.
 Metakides and Nerode proved that a vector space is c.e.~presented if and only if
 it is computably isomorphic to $V_\infty/ W$ for some $W\in \mathcal L (V_\infty)$.
 
  Many proofs of results for $\mathcal L (V_\infty )$ rely on the structure of $V_\infty$,
 hampering their adaptation to $\mathcal L (F_\infty)$, the lattice of
 c.e.~algebraically closed subfields of a sufficiently computable algebraically closed
 field $F_\infty$ with countably infinite transcendence degree.
 Matroids restrict interest to dependence properties common to both
 vector spaces and algebraic extensions, so proofs based on matroids can
 often be adapted to both vector space and field settings.
 
 In computability theoretic papers, matroids are often described in terms of \emph{Steinitz systems}.
 These are also called Steinitz  \emph{closure} systems~\cite{mnjoa} or
 Steinitz \emph{exchange} systems~\cite{nrpatras}.  Downey~\cite{dnws} defines
 a Steinitz system as a set $U$ and a closure operator $\clo$ mapping subsets
 of $U$ to subsets of $U$ such that if $A,B \subset U$,
 \begin{enumlist}
 \item $A \subset \clo (A)$,
 \item $A\subset B$ implies $\clo(A) \subset \clo (B)$,
 \item $\clo(\clo (A))= \clo (A)$,
 \item $x\in \clo (A) $ implies that, for some finite $A^\prime \subset A$, $x \in \clo (A^\prime )$, and
 \item (exchange) $x \in \clo (A \cup \{y\}) - \clo (A)$ implies $y \in \clo ( A \cup \{ x\} )$.
 \end{enumlist}
 As an intuitive example, we can think of $U$ as a vector space and $\clo(A)$ as 
  the linear span of the vectors in the set $A$.
 The Steinitz system $(U, \clo )$ has \emph{computable dependence} if $U$ is computable
 and there is a uniformly effective procedure that, when applied to $a, b_1 , \dots b_n \in U$,
 computes whether $a \in \clo ( \{ b_1, \dots b_n \} )$.
 
 A central goal in computable matroid research to discover algebraic properties of
 matroids with significant computability theoretic consequences.  For example, the
 Steinitz system
 $(U,\clo )$ has the \emph{closure intersection property} if whenever
 \begin{list}{$\bullet$}{}
 \item $D$ is closed, that is, $\clo (D) = D$,
 \item $A$ is independent over $D$, that is, for every $a \in A$, $a \notin \clo (D\cup A\setminus \{a\})$,
 \item $B$ is independent over $D$, and 
 \item $\clo (A \cup D) \cap \clo (B\cup D) = \clo (D)$,
 \end{list}
 then $A \cup B$ is independent over $D$.  The system is \emph{semiregular}
 (called \emph{Downey's semiregularity} by Nerode and Remmel~\cite{nrpatras}) if no finite dimensional
 closed set is the union of two closed proper subsets.  In his thesis~\cite{dthesis}
 (abstracted in~\cite{dbams}), Downey established that if $(U, \clo )$ is semiregular
 and has the closure intersection property then the theory of $\cal L (U)$ is
 undecidable.

\section{Reverse Mathematics}
 
In his development of the theory of matroids, Whitney~\cite{whitney}*{Section 6}
formulates matroids in terms of a ground set of elements and a specification
of every set as being either dependent or independent.  We define an
\emph{enumerated} matroid (\emph{e-matroid}) to consist of a set and an enumeration
of its finite dependent sets.
 
 \begin{defn}
 An \emph{e-matroid} is a pair $(M,e)$ consisting of a set $M$ and a function
 $e\colon \nat \to \mfin$ satisfying:
 \begin{enumlist}
 \item \label{em1} The empty set is independent.
 \[
 (\forall n)[ e(n) \neq \emptyset ]
 \]
 
 \item \label{em2} Finite supersets of dependent sets are dependent.
 \[
(\forall n )(\forall Y\in \mfin)[e(n)\subseteq Y \to \exists m (e(m)=Y)]
\]
 \item \label{em3} If $X$ is an independent set that is smaller than an independent set $Y$,
 then $Y$ contains an element that is independent of $X$.
 \[
 \begin{split}
(\forall X,Y \in \mfin)(\,\text{if } &  |X|<|Y|  \text{ and  } (\forall n)[e(n) \neq X \land e(n) \neq Y] \\
& 
\text{ then } (\exists y\in Y)( \forall n) [e(n) \neq X \cup \{y\}])
 \end{split}
 \]
  \end{enumlist}
 \end{defn}

Although dependence in this setting is not directly related to linear combinations,
it is still possible to formulate concepts of span and bases.

\begin{defn}
A subset $B$ of an e-matroid $(M,e)$ \emph{spans} the e-matroid if adjoining any additional element
to $B$ produces a dependent set, that is, 
\[
(\forall x \in M )[x \notin B \to (\exists n)(e(n)=B\cup\{x\})].
\]
A subset $B \subseteq M$ is a \emph{basis} for the e-matroid  if
$B$ is independent (that is, $(\forall n)[e(n) \not\subseteq B]$)
and $B$ spans~$M$.
 \end{defn}

We can now state our first basis theorem.  The analogous result showing the equivalence of $\aca$
and the existence of bases for vector spaces is included in Theorem 4.3 of
Friedman, Simpson, and Smith~\cite{fss}.

\begin{theorem}\label{b1}
$(\rca )$ The following are equivalent:
\begin{enumlist}
\item \label {b1a} $\aca$.
\item \label {b1b} Every e-matroid has a basis.
\end{enumlist}
\end{theorem}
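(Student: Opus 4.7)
The plan is to prove both directions separately, handling the easy greedy construction first and then engineering a suitable e-matroid for the reversal.

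For (\ref{b1a}) implies (\ref{b1b}), assume $\aca$ and let $(M,e)$ be an e-matroid. I would enumerate $M$ as $m_0,m_1,\dots$ and build $B = \bigcup_s B_s$ greedily, setting $B_{s+1} = B_s \cup \{m_s\}$ when this set is independent and $B_{s+1} = B_s$ otherwise. The test ``$B_s \cup \{m_s\}$ is independent'' unpacks to $(\forall n)[e(n) \not\subseteq B_s \cup \{m_s\}]$, a $\Pi^0_1$ condition, so $\aca$ produces the sequence of decisions and hence $B$ as a set. Independence of $B$ is inherited because any $e(n) \subseteq B$ is finite and would already lie inside some $B_s$. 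For spanning, whenever $m_s \notin B$ the greedy step rejected $m_s$, so $B_s \cup \{m_s\}$, and therefore $B \cup \{m_s\}$, already contains some $e(n)$ witnessing dependence.

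For (\ref{b1b}) implies (\ref{b1a}), I would reduce to the standard equivalent ``the range of every injection $f\colon \nat \to \nat$ exists as a set'' and encode $f$ into an e-matroid. Take ground set $M = \{a_0,a_1\} \cup \{b_{n,0}, b_{n,1} : n \in \nat\}$ and let $e$ dovetail, over stages $s$, the finite supersets of $\{a_0,a_1\}$ and of $\{b_{f(s),0}, b_{f(s),1}\}$. The fixed pair $\{a_0,a_1\}$ serves as a default nonempty output so that $e$ is provably total in $\rca$. Axiom (\ref{em1}) holds since $e$ never outputs $\emptyset$, and (\ref{em2}) holds by the dovetailing.

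The key structural observation is that every element of $M$ lies in at most one minimal dependent pair, so the matroid is a direct sum of rank-one ``parallel'' pairs (for $n \in \text{range}(f)$ and for the index $*$) and rank-two ``free'' pairs (for $n \notin \text{range}(f)$). From this, (\ref{em3}) follows: given finite independent $X,Y$ with $|X|<|Y|$, if no $y \in Y$ preserved independence of $X$, then each such $y$ would form a circuit with a unique partner $x_y \in X$; independence of $Y$ forces $x_y \in X \setminus Y$, and uniqueness of circuit-partners makes $y \mapsto x_y$ an injection $Y \setminus X \hookrightarrow X \setminus Y$, contradicting $|X|<|Y|$. Finally, any basis $B$ must contain exactly one of $\{b_{n,0}, b_{n,1}\}$ when $n \in \text{range}(f)$ and both when $n \notin \text{range}(f)$; hence $n \in \text{range}(f)$ iff $b_{n,0} \notin B$ or $b_{n,1} \notin B$, a $\Delta^0_0$ condition on $B$, so $\rca$ assembles $\text{range}(f)$ as a set.

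The main obstacle I expect is verifying exchange (\ref{em3}) rigorously in the e-matroid framework, since the axioms are stated in terms of the enumeration $e$ rather than an abstract closure operator, and the argument that each $y \in Y \setminus X$ maps injectively into $X \setminus Y$ must be formalized using only the enumerated circuits. A secondary but routine concern is arranging the dovetailed definition of $e$ so that $\rca$ proves it total and witnesses em2 uniformly; the default pair $\{a_0,a_1\}$ is what makes this work even when $f$ has produced no values yet.
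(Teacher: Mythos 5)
Your proposal is correct and follows essentially the same route as the paper: a greedy construction under $\aca$ for the forward direction, and for the reversal an e-matroid whose circuits are the pairs indexed by the range of the injection $f$, with the range then decoded from any basis by checking which pairs are not wholly contained in it, exactly as in the paper. Your default dependent pair $\{a_0,a_1\}$ and the partner-injection verification of the exchange axiom are only cosmetic variations on the paper's construction and its pigeonhole argument.
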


\begin{proof}
To show that \ref{b1a} implies \ref{b1b}, fix an e-matroid $(M,e)$. 
Let $m_0, m_1 , \dots$ be a non-repeating enumeration of $M$.  Consider the
function $g\colon \nat \to\mfin$ defined by $g(0)= \emptyset$ and for $i>0$,
\[
g(i)=
\begin{cases}
g(i-1) &\text{if~}  (\exists n )[e(n) = g(i-1) \cup \{ m_{i-1} \}],\\
g(i-1)\cup\{m_{i-1} \}&\text{otherwise.}
\end{cases}
\]
by arithmetical comprehension, the union of the range of $g$ exists; call this union $B$.
Straightforward arguments verify that $B$ is a basis for $M$.

To prove the converse, by Lemma III.1.3 of Simpson~\cite{simpson}, it suffices to use~\ref{b1b}
to prove the existence of the range of an arbitrary injection from $\nat$ to~$\nat$. 
Suppose $f\colon \nat \to \nat$ is an injection.
Let $M=\{ (i,\varepsilon ) : i \in \nat \land \varepsilon < 2 \}$ be the ground set for an e-matroid.
Let $M_0 , M_1 , \dots$ be an enumeration of $\mfin$.   Fix a bijective pairing function mapping
$\nat \times \nat$ onto $\nat$.
Using the notation $(j,k)$ for both the
pair and its integer code, define
$e((j,k)) = \{ (f(j),0) , (f(j),1)\} \cup M_k$. Because $(f(j),0)\in e((j,k))$, item~\ref{em1} of the definition
of an e-matroid holds.  The inclusion of $M_k$ in $e((j,k))$ ensures that supersets of dependent sets
are dependent, satisfying item~\ref{em2} of the definition.
To verify item~\ref{em3}, suppose $X$ and $Y$ are finite independent sets with $|X|<|Y|$.
If there is a $y \in X \cap Y$, then $X\cup \{y \} = X$ so $\forall n (e(n) \neq X \cup \{y\} )$.
Thus we need only consider the case where $X \cap Y = \emptyset$.
We hypothesized that $|Y|>|X|$, so there must be a $y =(z,\varepsilon ) \in Y$ such that for all
$\varepsilon^\prime$, $(z , \varepsilon^\prime ) \notin X$.  
Suppose by way of contradiction that $e(n) = X\cup \{y\}$ for some~$n$.
Then, for some $j$, we have $\{ (f(j),0),(f(j),1)\}\subset X \cup \{y\}$.  By the choice of $y$,
we know $f(j) \neq z$, so $\{(f(j),0) , (f(j),1) \} \subset X$, contradicting $(\forall n)[e(n) \neq X]$.
Thus item~\ref{em3} of the definition holds, and we have shown that $(M,e)$ is an e-matroid.

Finally, we claim that if $B$ is a basis for $M$, then $k$ is in the range of $f$ if and only if 
$(k,0) \notin B$ or $(k,1) \notin B$.  First note that if $k = f(j)$ then, assuming $0$ is the code
for $\emptyset$, we have $e((j,0)) = \{ (k,0),(k,1)\}$.
$B$ is a basis, so $e((j,0)) \not\subset B$, and thus $(k,0) \notin B$ or $(k,1) \notin B$.
Conversely, if for example $(k,0) \notin B$, then
$(\exists n)[ e(n) = B\cup \{(k,0)\}]$.  Because $e(n)$ is dependent and
$B$ is independent, both $(k,1) \in e(n)$ and for all $j \neq k$,
at least one of $(j,0)$ and $(j,1)$ is not in $e(n)$.  By the definition of $e$, $e(n)$ must
contain both $(a,0)$ and $(a,1)$ for some $a$ in the range of $f$, so $k$ is in the range of $f$.
A similar argument holds if $(k,1) \notin B$, completing the proof of our claim.  Because
$k$ is in the range of $f$ if and only if 
$(k,0) \notin B$ or $(k,1) \notin B$,
recursive comprehension suffices to prove the existence of the range of $f$, completing the reversal.
\end{proof}

Our next result shows that if we add a hypothesis bounding the dimension of the matroid, the principle asserting the existence of a basis becomes weaker.  
The result also illustrates the interrelatedness of matroids and
graph theory. We use the concept of rank to establish the dimensional bound.

\begin{defn}
We say the \emph{rank} of an e-matroid $(M,e)$ is \emph{no more than}~$n$ if every subset of $M$ of size $n$ is dependent,
that is, in the range of $e$.
\end{defn}

\begin{theorem}\label{b2}
$(\rca )$ The following are equivalent:
\begin{enumlist}
\item \label{b2a} For every $n$, every e-matroid of rank no more than~$n$ has a basis.
\item \label{b2b} For every $n$, if $G=(V,E)$ is a countable graph and every collection
of $n$ vertices contains at least one path connected pair, then $G$ can be decomposed
into its connected components.
\item \label{b2c} ${\sf I}\Sigma^0_2$, the induction scheme for $\Sigma^0_2$ formulas with set parameters.
\end{enumlist}
\end{theorem}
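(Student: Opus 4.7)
The plan is to establish the cycle $\ref{b2c} \Rightarrow \ref{b2a} \Rightarrow \ref{b2b} \Rightarrow \ref{b2c}$. The forward arrows will use $\iszt$ in essentially the same way in both cases: to extract the maximum size of an independent family whose growth is blocked by the rank or component bound. The reversal from the graph principle back to induction will be the most intricate step, and the main obstacle.

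For $\ref{b2c} \Rightarrow \ref{b2a}$, given an e-matroid $(M,e)$ of rank at most $n$, I would note that independence is $\Pi^0_1$, so ``there is an independent set of size $k$'' is $\Sigma^0_2$. This formula holds at $k=0$ and, by the rank hypothesis, fails at $k=n$. Applying $\iszt$ in its equivalent $\Sigma^0_2$ least-number-principle form produces the maximal $k_0$ for which the formula holds, together with a witnessing independent set $B$ of size $k_0$. The exchange axiom \ref{em3} then forces $B$ to span $M$, since any $x \in M \setminus B$ extending $B$ to an independent set would contradict the maximality of $k_0$.

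For $\ref{b2a} \Rightarrow \ref{b2b}$, given a graph $G = (V,E)$ satisfying the hypothesis, I would build an e-matroid on $V$ by declaring a finite set dependent iff it contains two path-connected vertices. The enumeration $e$ is assembled from the $\Sigma^0_1$ search for paths, together with explicit listings of finite supersets to satisfy \ref{em2}; \ref{em3} holds because independent sets are precisely partial transversals to the set of components. The hypothesis of \ref{b2b} translates directly into rank at most $n$, so \ref{b2a} supplies a basis $B$ with exactly one vertex per component. The component function $c(v)$, returning the unique $b \in B$ path-connected to $v$, is then computable by a $\Sigma^0_1$ search bounded on the $B$-coordinate.

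The hard part will be $\ref{b2b} \Rightarrow \ref{b2c}$. Given a $\Sigma^0_2$ formula $\phi(x) = \exists y\, \forall z\, \psi(x,y,z)$ with $x \leq N$, I plan first to Skolemize $\phi$ into an equivalent $\exists w\, \forall z\, \psi^*(x,w,z)$ whose satisfying $w$ is unique when it exists: $w$ encodes a pair $(y,Z)$ where $y$ is a purported minimal witness and $Z$ records the minimal falsifying $z$ for each $y' < y$, with $\psi^*$ a bounded formula verifying both $\psi(x,y,z)$ and the minimality data. Then I would take the graph with vertex set $\{*\} \cup \{(x,w,z) : x \leq N,\, w, z \in \nat\}$ and decidable edges $(x,w,z) \sim (x,w,z+1)$ and $(x,w,z) \sim *$ iff $\neg \psi^*(x,w,z)$. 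Each column $(x,w,\cdot)$ merges with $*$ when $w$ is non-canonical for $x$ and forms its own component when $w$ is the canonical witness, giving at most $N+2$ components total. The delicate point is verifying in $\rca$ alone that any $N+3$ vertices contain a path-connected pair: either two share a column (trivial path), or a pigeonhole on the $N+1$ possible $x$-coordinates together with ``at most one canonical $w$ per $x$'' produces at least two provably non-canonical columns, whose $\Sigma^0_1$ witnesses of bad $z$'s are then found in parallel to yield a path through $*$. Once \ref{b2b} is applied with $n = N+3$, the component labeling lets one recover $\{x \leq N : \phi(x)\}$ by reading off the $x$-coordinate of any representative in each non-$*$ component; this bounded $\Sigma^0_2$ comprehension is enough to derive $\iszt$.
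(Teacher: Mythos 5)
Your proposal follows the same cycle \ref{b2c}~$\Rightarrow$~\ref{b2a}~$\Rightarrow$~\ref{b2b}~$\Rightarrow$~\ref{b2c} as the paper, and your first two legs are essentially the paper's own arguments: a $\Sigma^0_2$ least/greatest element principle (obtained from $\iszt$ via bounded $\Sigma^0_2$ comprehension) applied to ``there is an independent set of size $k$,'' and the e-matroid whose independent sets are the path-disconnected vertex sets. The genuine difference is the last leg: the paper does not prove \ref{b2b}~$\Rightarrow$~\ref{b2c} at all, citing Hirst and Gura--Hirst--Mummert instead, whereas you give a direct construction (columns indexed by canonicalized witnesses $w=(y,Z)$, a hub vertex $*$, and edges into $*$ at falsifying $z$'s). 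Your construction works: uniqueness of the canonical $w$ for each $x\le N$ bounds the components by $N+2$; the $\rca$-verification of the hypothesis for $n=N+3$ goes through using the finite pigeonhole principle and bounded $\Sigma^0_1$ comprehension (both available in $\rca$) to extract non-canonical columns together with their bad-$z$ witnesses; and $\phi(x)$ is $\Sigma^0_1$-definable from the decomposition ($\exists w$ with $(x,w,0)$ not in the component of $*$), so bounded $\Sigma^0_1$ comprehension gives $\{x\le N:\phi(x)\}$, i.e.\ bounded $\Sigma^0_2$ comprehension, which yields $\iszt$ by the same Simpson exercise the paper invokes. What your route buys is a self-contained reversal in the spirit of the cited graph-theoretic results; what the paper's route buys is brevity by outsourcing exactly that step.

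Three small patches are needed. First, in \ref{b2a}~$\Rightarrow$~\ref{b2b} the enumeration $e$ must be total with nonempty, genuinely dependent values, so you need a default dependent set to output before any path has been found (and the construction is impossible if no dependent set exists); the paper splits on whether $G$ has an edge, handles the edgeless case by the identity decomposition, and uses a fixed edge $\{v_0,v_1\}$ as filler. Second, in your hypothesis check you should also treat the case that $*$ itself is among the $N+3$ chosen vertices: there only one non-canonical column is guaranteed, but its vertex is path-connected to $*$, which lies in the chosen set. Third, in \ref{b2c}~$\Rightarrow$~\ref{b2a} the spanning of the maximal independent set follows from maximality of $k_0$ alone; the exchange axiom \ref{em3} plays no role in that step.
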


\begin{proof}
Proofs that~\ref{b2b} implies~\ref{b2c} appear as Theorem 4.5 of Hirst~\cite{hirst1992} and also as
Theorem 3.2 of Gura, Hirst, and Mummert~\cite{gura}.  Here, we will prove that~\ref{b2c} implies
\ref{b2a} and~\ref{b2a} implies~\ref{b2b}.

To see that~\ref{b2c} implies~\ref{b2a}, fix $n$ and let $(M,e)$ be an e-matroid of rank no more than~$n$.
Let $\psi (j)$ formalize the existence of an independent set of size $n-j$.  If we use $X_t$ to denote
the finite subset of $\nat$ encoded by $t$, then $\psi (j)$ can be written as
\(
(\exists t)[|X_t |= n-j \land \forall k (e(k)\neq X_t) ].
\)
Note that $\psi(j)$ is a $\Sigma^0_2$ formula, and the empty set witnesses $\psi (n)$.  By the $\Sigma^0_2$
least element principle (which is easily deduced from the bounded $\Sigma^0_2$ comprehension, and is therefore a consequence of~\ref{b2c} by Exercise II.3.13 of Simpson~\cite{simpson}), there is a least
$j_0$ such that $\psi(j_0 )$.  Let $X_{t_0}$ witness $\psi (j_0 )$.  We claim that $X_{t_0}$ is a basis.
The range of $e$ is closed under supersets, so no subset of $X_{t_0}$ appears in the range of $e$.  By the minimality
of $j_0$, if $x\notin X_{t_0}$, then $X_{t_0} \cup \{ x \}$ is dependent, so for some $n$, $e(n) = X_{t_0} \cup \{ x\}$.
Thus $X_{t_0}$ spans $M$.

To show that~\ref{b2a} implies~\ref{b2b}, let $G(V,E)$ be a graph in which every collection of $n$ vertices contains
at least one path connected pair.  The independent sets of our e-matroid will consist of subsets of $V$ with no
path connected pairs.  If $G$ contains no edges, the identity function on $V$ decomposes $G$ into connected
components.  Suppose $G$ has an edge connecting the vertices $v_0$ and $v_1$.  Let $(V_i )_{i \in \nat}$ be
an enumeration of the finite subsets of $V$ such that every subset appears infinitely often.  Define $e(j)$ by
$e(j)= V_j$ if there is some $t<j$ that encodes a path between two vertices of $V_j$, and $e(j) = \{ v_0 , v_1\}$
otherwise.  It is easy to verify that $(V,e)$ satisfies the first two clauses of the definition of an e-matroid.
For the third clause, suppose $X$ and $Y$ are finite sets of vertices such that no pair in either set is
path connected, and that $|X|<|Y|$.  Suppose by way of contradiction that every vertex in $Y$ is path connected
to some vertex in $X$.  $\rca$ can prove the existence of the function mapping each $y\in Y$ to some $x\in X$
to which it is path connected, and because $|X|<|Y|$, $f$ must map two elements of $Y$ to the same $x$.
These two vertices of $Y$ are path connected, yielding the desired contradiction.  Thus $(V,e)$ is a matroid.
By~\ref{b2a}, $(V,e)$ has a basis, which is a maximal set of disconnected vertices in $G$.  The function which
is the identity on this basis and maps very other vertex of $G$ to the element of the basis to which it is path
connected is a decomposition of $G$ into connected components.  This decomposition is computable from
the basis, so $\rca$ proves~\ref{b2a} implies~\ref{b2b}.
\end{proof}

\section{Why e-Matroids?}

We can define a matroid as a pair $(M,D)$ where $D$ is the set of all finite dependent subsets of $M$.
In this case, $D$ satisfies the set-based analogs of the three items in the definition of e-matroid. 
To express this definition within $\rca$, we represent each finite subset of $M$ via its characteristic index.
 Using
the set-based analog of the definition of basis, we can state and prove the following result.

\begin{theorem}\label{basis1}
$(\rca )$  Every matroid has a basis.
\end{theorem}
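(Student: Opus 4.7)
The plan is to adapt the greedy construction from the proof of Theorem \ref{b1}, exploiting the fact that the matroid is now presented by a \emph{set} $D$ of characteristic indices rather than by an enumeration of dependent sets. Because $D$ has a characteristic function in $\rca$, the relation ``the finite subset of $M$ coded by $t$ lies in $D$'' is $\Delta^0_1$, so dependence of any finite set is decidable rather than merely $\Sigma^0_1$. This is exactly what is needed to replace the appeal to arithmetical comprehension by recursive comprehension.

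First I would fix a non-repeating enumeration $m_0, m_1, \dots$ of $M$ and define $g \colon \nat \to \mfin$ by primitive recursion, setting $g(0) = \emptyset$ and
\[
g(i+1) = \begin{cases} g(i) \cup \{m_i\} & \text{if } g(i) \cup \{m_i\} \notin D, \\ g(i) & \text{otherwise.} \end{cases}
\]
Because the case distinction is decidable, $g$ exists by recursive comprehension, and then $B = \{m_i : m_i \in g(i+1)\}$ is $\Delta^0_1$ and so exists in $\rca$.

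Finally I would verify that $B$ is a basis in the set-based sense. The set-based analog of clause \ref{em2} asserts that $D$ is closed under finite supersets, so independence is preserved under subsets; consequently every $g(i)$ is independent by construction, and any finite subset of $B$ lies (by monotonicity of $g$) in some $g(i)$ and hence does not belong to $D$, so $B$ is independent. For spanning, if $x \in M \setminus B$, pick $i$ with $m_i = x$; then by construction $g(i) \cup \{x\} \in D$, and since $g(i) \subseteq B$ this exhibits a finite dependent subset of $B \cup \{x\}$ containing $x$, witnessing that $x$ lies in the span of $B$. There is no single hard step here: the substantive observation is that presenting $D$ as a set rather than as an enumeration converts the greedy choice into a primitive recursion, so a result that for e-matroids required $\aca$ becomes provable in $\rca$ in the matroid setting.
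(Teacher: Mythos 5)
Your proposal is correct and follows essentially the same route as the paper: a greedy primitive recursion along an enumeration of $M$, made possible by the $\Delta^0_1$ decidability of membership in $D$, followed by the same independence and spanning verifications (the paper checks independence by showing an arbitrary dependent set omits its largest-indexed element from $B$, while you note each stage is independent and every finite subset of $B$ sits inside some stage, a negligible difference). No gaps.
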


\begin{proof}
Let $(M,D)$ be a matroid and let $m_1, m_2, \dots$ be a non-repeating enumeration of $M$.
Define a nested sequence of finite independent sets $\langle I_j \rangle_{j \in \nat}$ as follows.
Let $I_0 = \emptyset$.  For $j>0$, let $I_j = I_{j-1}$ if $I_{j-1}\cup \{m_j\} \in D$, and let
$I_j = I_{j-1} \cup \{m_j\}$ otherwise.  Define the basis $B$ by $m_j\in B$ if and only if
$m_j \in I_j$.  To see that $B$ is independent, suppose $X$ is a finite dependent set.
Let $m_j$ be the element of largest index in $X$.  If $X \setminus \{m_j\} \subset I_{j-1}$,
then $m_j \notin I_j$, so $m_j \notin B$ and $X \not\subset B$.  If $X \setminus \{m_j\} \not\subset I_{j-1}$
then $X \not\subset I_j$, so $X \not\subset B$.  Summarizing, $B$ has no finite dependent subsets,
so $B$ is independent.  To see that $B$ spans, fix $m_j \in M$.  Either $m_j \in B$, or both
$B\supset I_{j-1} \notin D$ and $I_{j-1} \cup \{m_j\} \in D$.  In either case, $m_j$ is in the span of $B$.
\end{proof}

The preceding result can be viewed as a reverse mathematical reframing of the statement:
\emph{Every computably presented matroid has a computable basis.}  This principle was
stated of Crossley and Remmel~\cite{crossley}*{\textsection{}5, Lemma
1}, who describe it as common knowledge
and implicit in the work of Metakides and Nerode~\cite{metakides}.  
The representations of the matroid by a computable dependence relationship or by a
dependence algorithm for a Steinitz system with computable dependence are equivalent.
The next theorem is a
reverse mathematics analog of the fact that not every c.e.~presented matroid is computably
isomorphic to a computably presented matroid.

\begin{theorem}\label{iso}
$(\rca )$  The following are equivalent:
\begin{enumlist}
\item  \label{isoa}$\aca$.
\item \label{isob}Every e-matroid is isomorphic to a matroid.  
That is, if $(M,e)$ is an e-matroid, then there
is a matroid $(N, D)$ and a bijection $h\colon M\to N$ such that for all finite
sets $X \subset M$, there is an $n$ such that $e(n)=X$ if and only if
$\{ h(x) :  x\in X\} \in D$.
\end{enumlist}
\end{theorem}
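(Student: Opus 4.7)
The plan is to prove both directions by leveraging constructions already used in the paper, with the reversal reusing essentially the same e-matroid from the proof of Theorem~\ref{b1}.

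For \ref{isoa}$\Rightarrow$\ref{isob}, I would fix an e-matroid $(M,e)$ and let $h$ be the identity, so the task reduces to producing a set $D$ of characteristic indices that collects the finite dependent subsets of $M$. Using $\aca$, the set
\[
D = \{ t : (\exists n)(e(n) = X_t)\}
\]
exists by arithmetical comprehension, where $X_t$ denotes the finite set coded by~$t$. The three e-matroid axioms~\ref{em1}--\ref{em3} translate directly into the set-based analogs required for $(M,D)$ to be a matroid in the sense of Section~3, so verification is routine.

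For \ref{isob}$\Rightarrow$\ref{isoa}, I would again appeal to Simpson's Lemma~III.1.3 and start with an arbitrary injection $f\colon\nat\to\nat$, recycling the e-matroid $(M,e)$ built in the reversal for Theorem~\ref{b1}: with $M = \{(i,\varepsilon):i\in\nat,\varepsilon<2\}$ and $e((j,k)) = \{(f(j),0),(f(j),1)\} \cup M_k$. The key observation is that, for this e-matroid, a two-element set $\{(i,0),(i,1)\}$ is in the range of $e$ if and only if $i$ lies in the range of~$f$. Apply~\ref{isob} to obtain a matroid $(N,D)$ and a bijection $h\colon M\to N$. Then
\[
i \in \operatorname{range}(f) \iff t_i \in D,
\]
where $t_i$ is the characteristic index of $\{h((i,0)),h((i,1))\}$. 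Since $h$ is a function and $D$ is a set, the right-hand side is $\Delta^0_1$, so $\rca$ proves the existence of $\operatorname{range}(f)$, completing the reversal.

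The main obstacle, as I see it, is bookkeeping rather than mathematics: one must be careful that the notion of ``matroid'' in Section~3 really is given by a set of characteristic indices (so membership in $D$ is decidable in $\rca$ once $D$ is available), and that the three axioms of an e-matroid translate cleanly through the identity map in the forward direction. In the reversal, one must confirm that no \emph{other} two-element subsets of $M$ become dependent through the construction -- a quick inspection of the form of $e((j,k))$ shows the only two-element sets in the range of $e$ are precisely the pairs $\{(f(j),0),(f(j),1)\}$, since any $M_k$ of size zero or one forces $e((j,k))$ to have size at least two from the $\{(f(j),0),(f(j),1)\}$ piece. Once these checks are made, both directions follow with no further complication.
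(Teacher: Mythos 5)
Your proposal is correct and follows essentially the same route as the paper: the forward direction takes $D$ to be the (arithmetically definable) range of $e$ with $h$ the identity, and the reversal reuses the e-matroid from Theorem~\ref{b1}, observing that $k\in\operatorname{range}(f)$ iff $\{(k,0),(k,1)\}$ is in the range of $e$ iff $\{h((k,0)),h((k,1))\}\in D$, which is $\Delta^0_1$ in $h$ and $D$. Your extra check that the only two-element sets in the range of $e$ are the pairs $\{(f(j),0),(f(j),1)\}$ is exactly the detail needed to justify the backward implication of that biconditional.
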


\begin{proof}
To see that~\ref{isoa} implies~\ref{isob}, suppose $(M,e)$ is an e-matroid.
The range of $e$ is arithmetically definable using $e$ as a parameter,
so $\aca$ proves the existence of the range as a set $D$.  Then $(M,D)$ is a matroid
and the identity is the desired isomorphism.

To prove the converse, we capitalize on the construction from the proof of the reversal
of Theorem~\ref{b1}.  As in that proof, fix an injection $f$ and construct the associated e-matroid
$(M,e)$.  Apply~\ref{isob} above to find a matroid $(N,D)$ and an isomorphism $h\colon M \to N$.
By the construction of $(M,e)$, for each $k\in \nat$, $k$ is in the range of $f$ if and only if
$\{(k,0),(k,1)\}$ is in the range of~$e$, which holds if and only if $\{h((k,0)),h((k,1))\}\in D$.
Thus, the range of $f$ is computable from $D$ and $h$, completing the proof of the reversal.
\end{proof}

In terms of Turing degrees, the previous theorem  only shows
that each c.e.~presented matroid is computable from~$\zej$.  
The next corollary shows that, if a c.e~presented matroid is isomorphic
to a computable matroid, the isomorphism may necessarily be noncomputable. 

\begin{coro}
There is a c.e.~presented matroid $M$, which is isomorphic to a computable matroid, 
such that if $\varphi$ is any isomorphism between $M$ and a computable matroid 
then $\zej$ is Turing computable from $\varphi$.
\end{coro}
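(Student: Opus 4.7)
The plan is to instantiate the reversal construction from Theorem~\ref{iso} with a specific computable injection whose range has Turing degree $\zej$. Fix a computable injection $f\colon \nat \to \nat$ enumerating the halting problem $K$, so that $\mathrm{range}(f)$ has degree $\zej$. Build the e-matroid $(M,e)$ exactly as in the reversal of Theorem~\ref{b1}: the ground set is $M = \nat \times \{0,1\}$, and the range $D$ of $e$ consists, after unwinding the definition $e((j,k)) = \{(f(j),0),(f(j),1)\} \cup M_k$, of exactly those finite subsets of $M$ that contain $\{(a,0),(a,1)\}$ for some $a \in \mathrm{range}(f) = K$. Because $f$ is computable, so is $e$, and hence $(M,D)$ is a c.e.~presented matroid.

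Next I would verify that $(M,D)$ is isomorphic to a computable matroid. Abstractly, $(M,D)$ is the direct sum of a rank-$1$ uniform matroid on each pair $\{(a,0),(a,1)\}$ with $a \in K$, and a free rank-$2$ matroid on each pair with $a \notin K$. Since both $K$ and its complement are infinite, this abstract isomorphism type is realized by the computable matroid $(N,D_N)$ on $N = \nat \times \{0,1\}$ whose dependent sets are the finite subsets containing $\{(2a,0),(2a,1)\}$ for some $a$. If $k_0 < k_1 < \cdots$ enumerates $K$ and $\ell_0 < \ell_1 < \cdots$ enumerates $\nat \setminus K$, the bijection sending $(k_i,\varepsilon) \mapsto (2i,\varepsilon)$ and $(\ell_i,\varepsilon) \mapsto (2i+1,\varepsilon)$ is an isomorphism, so $(M,D)$ is indeed isomorphic to a computable matroid.

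Finally, let $\varphi$ be any isomorphism from $(M,D)$ to any computable matroid $(N',D')$. I would decode $K$ from $\varphi$ using the following uniform procedure: to decide whether $k \in K$, compute the two values $\varphi(k,0)$ and $\varphi(k,1)$ and ask whether $\{\varphi(k,0),\varphi(k,1)\} \in D'$. Because $\varphi$ preserves dependence and $D'$ is computable, this returns yes precisely when $\{(k,0),(k,1)\} \in D$, which by the construction of $e$ holds precisely when $k \in \mathrm{range}(f) = K$. Hence $K \leq_T \varphi$, and since $K$ has degree $\zej$, we conclude $\zej \leq_T \varphi$.

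The only delicate point is that the corollary quantifies over \emph{all} isomorphisms to \emph{all} computable targets, not merely the particular computable matroid $(N,D_N)$ exhibited in the second paragraph. The decoding in the third paragraph handles this uniformly: it uses only two values of $\varphi$ and one membership query to the computable set $D'$, and so works for any computable target whatsoever. Apart from this observation, the argument is essentially a Turing-degree refinement of the reversal in Theorem~\ref{iso}.
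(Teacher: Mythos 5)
Your proposal is correct and follows essentially the same route as the paper: instantiate the reversal construction of Theorem~\ref{b1} with a computable injection whose range has degree $\zej$, observe that because that range is infinite and co-infinite the resulting matroid is isomorphic to an explicit computable matroid with infinitely many parallel pairs and infinitely many free elements, and decode the range from any isomorphism via dependence queries as in Theorem~\ref{iso}. The only differences are cosmetic (your target matroid lives on $\nat\times\{0,1\}$ rather than the paper's $\{3k,3k+1\}$ pattern on $\nat$, and you spell out the decoding that the paper cites from Theorem~\ref{iso}).
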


\begin{proof}
Let $f$ be any computable injection with a range that computes $\zej$.  Use the construction
of $(M,e)$ from the proof of the reversal of Theorem~\ref{b1}.  This is the desired c.e.~presented
matroid.  The proof of Theorem~\ref{iso} shows that any isomorphism between $(M,e)$ and a
computable matroid computes the range of $f$ and consequently computes $\zej$.  Since the
range of $f$ is both infinite and co-infinite, $(M,e)$ is isomorphic to the computable matroid with
ground set $\nat$ and $D$ consisting of all finite supersets of sets of the form $\{3k , 3k+1\}$
where $k\in \nat$.
\end{proof}

\section{Weihrauch Reducibility}

In Theorem~\ref{basis1}, we used Reverse Mathematics to study the problem 
of finding a basis for an e-matroid. In this section, we study the same problem using
 Weihrauch reducibility. For additional information on Weihrauch reducibility, see 
Brattka and Gherardi~\cite{bg2011} and Dorais, Dzhafarov, Hirst, Mileti, and Shafer~\cite{ddhms2016}. The following simplified definition of Weihrauch problems will be sufficient for our purposes.

\newcommand{\equivsW}{\equiv_{\textup{sW}}}
\newcommand{\leqsW}{\leq_{\textup{sW}}}

\newcommand{\equivW}{\equiv_{\textup{W}}}
\newcommand{\leqW}{\leq_{\textup{W}}}

\begin{defn} 
A \emph{Weihrauch problem} is a subset of $\nat^\nat \times \nat^\nat$, $\nat^\nat \times \nat$,
$\nat \times \nat^\nat$, or $\nat \times \nat$. For a Weihrauch problem $P$, the ``problem'' is:
given an ``instance'' $I \in \operatorname{dom}(P)$, produce a ``solution'' $S$ with $(I,S) \in P$. 

A Weihrauch problem $P$ is \emph{Weihrauch reducible} to a Weihrauch problem $Q$, written
$P \leqW Q$, if there
are computable functions or functionals $\Phi, \Psi$ such that, for all $S \in \operatorname{dom}(P)$,
$\Phi(S) \in \operatorname{dom}(Q)$, and for all $R$ such that $(\Phi(S), R) \in Q$,
we have $(S, \Psi(R,S)) \in P$.  If this can be done with a functional $\Psi$ that does not
depend on $S$, we say that $P$ is \emph{strongly Weihrauch reducible} to $Q$, written $P \leqsW Q$.
The relations $\leqW$ and $\leqsW$ are reflexive and transitive, and thus they induce equivalence relations, which are denoted $\equivW$ and $\equivsW$, respectively. 

The \emph{parallelization} of a Weihrauch problem $P$ is the problem 
\[
\widehat P
= \{ (f, g) : (f(n) , g(n)) \in P \text{ for all } n \in \nat \}
\]
whose instances are sequences of instances of $P$ and whose solutions are
sequences of solutions corresponding to those instances. 

\end{defn}

\begin{defn}
We define the following Weihrauch principles. The first two are well known in the literature~\cite{bbp2012}.
\begin{itemize}
\item $\mathsf{C}_{\nat}$: closed choice for subsets of $\nat$.
\[
\mathsf{C}_\nat = \{ (f, n) : f \in \nat^\nat, n \not \in \operatorname{range}(f) \}
\]

\item $\widehat{\mathsf{C}}_{\nat}$: the parallelization of $\mathsf{C}_\nat$.
\[
\widehat{\mathsf{C}}_\nat = \{ (f, g) :  ((f)_n,g(n)) \in \mathsf{C}_\nat \text{ for all $n \in \nat$ } \}
\]

\item $\mathsf{GAC}$: the graph antichain problem. For a countable graph $G$, 
an antichain is a set of vertices no two of which are connected by a path in~$G$.
Letting $\operatorname{Max}(G)$ be the set of maximal antichains of $G$, we have
\[
\mathsf{GAC} = \{ (G, A) : G \text{ is a countable graph}, A \in \operatorname{Max}(G)\}
\]

\item $\mathsf{EMB}$: the e-matroid basis problem.
\[
\mathsf{EMB} = \{ (M,B): M \text{ is a countable e-matroid}, B \text{ is a basis for $M$}\}
\]

\item $\mathsf{VSB}$: the vector space basis problem. 
\[
\textsf{VSB} = \{ (V,B) : V \text{ is a countable vector space and } B \text{ is a basis for $V$}\} 
\]

\end{itemize}
For each $n > 1$ in $\nat$, we define the following  restricted principles:
\begin{itemize}
\item $\mathsf{GAC}_n$: the restriction to $\mathsf{GAC}$ to graphs with $n$ connected components. 
\item $\mathsf{EMB}_n$: the restriction of $\mathsf{EMB}$ to e-matroids with a basis of size~$n$. 
\item $\mathsf{VSB}_n$: the restriction of $\mathsf{VSB}$ to vectors spaces with dimension~$n$.
\end{itemize}

\end{defn}

In previous work~\cite{gura}, we considered another well known
Weihrauch problem, $\mathsf{LPO}$.
\[
\mathsf{LPO} = \{ (f,n) : f \in \mathbb{N}^\mathbb{N} \text{ and } 
f(n) = 0 \leftrightarrow (\exists m)[f(m) = 0]\}
\] 
  The following lemma shows that the parallelization of $\mathsf{LPO}$ is strict Weihrauch equivalent
to the parallelization of $\mathsf{C}_\mathbb{N}$. This equivalence is implicit in work of Brattka and Gherardi~\cites{bg2011, bg2011b}, but the reductions obtained by combining their results are very indirect. The next lemma provides a pair
of direct reductions. 

\begin{lem}
$\widehat{\mathsf{C}}_\nat$ is strongly Weihrauch equivalent to $\widehat{\mathsf{LPO}}$.
\end{lem}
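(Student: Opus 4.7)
The plan is to prove both strong reductions separately at the level of single instances, then lift them to the parallelized versions: strong Weihrauch reducibility propagates through coordinate-wise application of $\Phi$ and $\Psi$ under parallelization, and since $\widehat{\widehat{\mathsf{LPO}}} \equivsW \widehat{\mathsf{LPO}}$, the two reductions together will yield the desired equivalence.

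For the direction $\widehat{\mathsf{LPO}} \leqsW \widehat{\mathsf{C}}_\nat$, I would first establish $\mathsf{LPO} \leqsW \mathsf{C}_\nat$. Given an LPO instance $f$, set $\Phi(f) = g_f$ where $g_f(m) = 0$ if $f(m) = 0$ and $g_f(m) = m+1$ otherwise. When $f$ has no zero, the range of $g_f$ is $\{1, 2, \ldots\}$, forcing the unique valid $\mathsf{C}_\nat$ answer to be $0$; when $f$ does have a zero, every $n$ omitted from the range of $g_f$ is positive and satisfies $f(n-1) = 0$. The post-processor $\Psi$ defined by $\Psi(0) = 0$ and $\Psi(n) = n - 1$ for $n \geq 1$ therefore sends every valid $\mathsf{C}_\nat$ answer to a valid LPO answer, independently of $f$. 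Parallelizing coordinate-wise gives $\widehat{\mathsf{LPO}} \leqsW \widehat{\mathsf{C}}_\nat$.

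The direction $\widehat{\mathsf{C}}_\nat \leqsW \widehat{\mathsf{LPO}}$ is the delicate one, and I expect it to be the main obstacle. The naive LPO query $h_k(m) = 0 \leftrightarrow g(m) = k$ about whether $k$ lies in the range of a $\mathsf{C}_\nat$ instance $g$ encodes the answering bit only via the truth value of $h_k$ at the returned index, yet strong reducibility forbids the post-processor $\Psi$ from consulting $g$ directly. My plan is to run two parallel families of LPO queries per instance $g$, combined via pairing into a single sequence. A \emph{decoding} family $A_{j,v}$ is formed by first evaluating $g(j)$ in the preprocessor and then defining $A_{j,v}$ to have its unique zero at position $0$ if $g(j) = v$ and at position $1$ otherwise; this forces the LPO answer into $\{0,1\}$ with value $0$ exactly when $g(j) = v$, so $\Psi$ can recover $g(j)$ by searching $v$ for an $A_{j,v}$-answer equal to $0$. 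A \emph{searching} family $B_k(m) = 0 \leftrightarrow g(m) = k$ produces an LPO answer $b_k$ which must satisfy $g(b_k) = k$ when $k \in \operatorname{range}(g)$ and automatically satisfies $g(b_k) \neq k$ when $k \notin \operatorname{range}(g)$. Thus $\Psi$, using only the LPO responses, can compute $g(b_k)$, compare it with $k$, and output the least $k$ for which the two disagree; this $k$ is guaranteed to lie outside $\operatorname{range}(g)$. Parallelizing across the input sequence then yields $\widehat{\mathsf{C}}_\nat \leqsW \widehat{\widehat{\mathsf{LPO}}} \equivsW \widehat{\mathsf{LPO}}$, and the key point requiring care is that the placement of the unique zero in each $A_{j,v}$ relies on $\Phi$'s access to $g$, access that is correctly denied to $\Psi$.
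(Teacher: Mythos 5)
Your proof is correct, and its overall architecture---two single-instance reductions lifted through the monotonicity and idempotence of parallelization---matches the paper's. Where you genuinely depart from the paper is in the care taken with the requirement that the backward functional $\Psi$ may not consult the original instance, and this is worth comparing. The paper's sketch of $\widehat{\mathsf{C}}_\nat \leqsW \widehat{\mathsf{LPO}}$ poses one query $p_n$ per candidate $n$ and then ``searches for the least $n$ such that $p_n$ does not have $0$ in its range''; under the paper's index-valued definition of $\mathsf{LPO}$ (the solution is a position $n$ with $f(n)=0 \leftrightarrow \exists m\,[f(m)=0]$, not a boolean), that search cannot be carried out from the returned indices alone, so the sketch tacitly assumes either the boolean-output form of $\mathsf{LPO}$ or access to the $p_n$. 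Your decoding family $A_{j,v}$ repairs exactly this point: by letting the preprocessor place the unique zero at position $0$ or $1$ according to whether $g(j)=v$, the valid answer is forced to be that position, the answer itself becomes the bit, and $\Psi$ can simulate evaluation of $g$ without touching it; combined with the searching family $B_k$ this correctly identifies the least $k$ outside the range. Similarly, in the direction $\mathsf{LPO} \leqsW \mathsf{C}_\nat$ your choice $g_f(m)=0$ or $m+1$ makes every valid $\mathsf{C}_\nat$ answer decode (via $n \mapsto n-1$) to an actual zero of $f$, whereas the paper's complement $\{n+1, n+2, \dots\}$ only certifies that a zero exists somewhere below the answer, which does not by itself produce the witness index that the paper's definition of $\mathsf{LPO}$ demands. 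So your argument is not merely correct but tightens the published sketch at both ends; the only cost is the extra bookkeeping of interleaving two query families and invoking $\widehat{\widehat{\mathsf{LPO}}} \equivsW \widehat{\mathsf{LPO}}$.
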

\begin{proof}
First, suppose we are given an instance $f$ of $\mathsf{C}_\nat$. The function $f$ enumerates
the complement of some nonempty set. We form a sequence $(p_n)$ of instances of $\mathsf{LPO}$
such that $p_n$ has $0$ in its range if and only if $n$ is in the range of~$f$. Then, given solutions to the instance $(p_n)_{n \in \nat}$ of $\widehat{\mathsf{LPO}}$, we can search effectively for the least $n$ such that $p_n$ does not have $0$ in its range, which will be the least $n$ not in the range of~$f$. Thus, by effective dovetailing, $\widehat{\mathsf{C}}_\mathbb{N}$ is strict Weihrauch reducible to~$\widehat{\mathsf{LPO}}$.  

 For the converse, we first reduce $\mathsf{LPO}$ to $\mathsf{C}_\nat$, as follows. Given an instance $p$ of $\mathsf{LPO}$, we enumerate in stages the complement of a nonempty set $A = A(p)$. If $p(0) > 0$, we enumerate $1$ into the complement of A. Then if $p(1) > 0$ we enumerate 2 into the complement of A.  We continue in this way. If we ever find that $p(n) = 0$ for some $n$, we enumerate $0$ into the complement of $A$, after which we do not enumerate anything else into the complement, so we will have $A = \{n+1, n+2, \ldots\}$. On the other hand, if $0$ is not in the range of $p$, then we continue enumerating elements into the complement of $A$, so that we will obtain $A = \{ 0 \}$. Hence, if we view $A$ as an instance of $\mathsf{C}_\nat$, we can determine whether $(\exists m)[p(m) =0]$  by looking at the value of any solution.  Thus $\mathsf{LPO}$ is strict Weihrauch reducible to $\mathsf{C}_\nat$, and so the parallelization of $\mathsf{LPO}$ is strict Weihrauch reducible to the parallelization of~$\mathsf{C}_\nat$. 
\end{proof}

\begin{theorem}\label{w1} The following strong Weihrauch equivalences hold:
\[
\mathsf{GAC} \equivsW \mathsf{EMB} \equivsW \mathsf{VSB} \equivsW \widehat{\mathsf{C}}_\nat.
\]
\end{theorem}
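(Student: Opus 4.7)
I will establish the four equivalences by proving a cycle of strong Weihrauch reductions:
\[
\widehat{\mathsf{C}}_\nat \leqsW \mathsf{GAC} \leqsW \mathsf{EMB} \leqsW \mathsf{VSB} \leqsW \widehat{\mathsf{C}}_\nat.
\]
The previous lemma reduces $\widehat{\mathsf{C}}_\nat$ to $\widehat{\mathsf{LPO}}$, and I will use that to set up the leftmost reduction: given a sequence of instances $(p_n)$ of $\mathsf{LPO}$, construct a graph $G$ as the disjoint union of components $G_n$, where each $G_n$ has two potential ``answer'' vertices $a_n^0, a_n^1$ plus a spine; as $p_n(k)$ is computed, add edges that path-connect $a_n^0$ to the spine if $p_n(k) > 0$ for all $k < s$, and path-connect $a_n^1$ to $a_n^0$ as soon as some $p_n(k) = 0$ is found, ensuring any maximal antichain of $G$ picks out exactly one of $\{a_n^0, a_n^1\}$ per component and thereby decides $\mathsf{LPO}$ at index $n$ uniformly.

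The reduction $\mathsf{GAC} \leqsW \mathsf{EMB}$ formalizes the construction from the proof of \ref{b2a}$\Rightarrow$\ref{b2b} in Theorem~\ref{b2}: given a graph $G = (V,E)$, define the e-matroid $(V, e)$ whose enumerated dependent sets are all finite subsets of $V$ that contain some pair witnessed (by a computable search through finite edge-paths) to be path-connected, together with supersets of a fixed dependent seed $\{v_0, v_1\}$ on a designated edge. Verification of the three e-matroid axioms is as in Theorem~\ref{b2}, and any basis of $(V,e)$ is precisely a maximal path-disconnected set, i.e., a maximal antichain in~$G$; the output transformation is the identity and therefore strong. For $\mathsf{VSB} \leqsW \widehat{\mathsf{C}}_\nat$, I parallelize the greedy algorithm: enumerate the vector space as $v_0, v_1, \ldots$ and, for each $i$, use one instance of $\mathsf{C}_\nat$ (or equivalently $\mathsf{LPO}$) to decide whether $v_i$ is in the linear span of $\{v_0, \ldots, v_{i-1}\}$, a $\Sigma^0_1$ question since witnesses are finite linear combinations. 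A solution to the parallelization yields exactly the characteristic function of the greedy basis, and the recovery functional is independent of the input vector space, giving strong reducibility.

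The main obstacle is $\mathsf{EMB} \leqsW \mathsf{VSB}$, since not every matroid is representable over a chosen field and the strong reduction forbids the output functional from consulting the original e-matroid. My plan here is to construct a vector space $V$ over $\mathbb{Q}$ with a designated presentation that carries enough of $(M,e)$ to recover a basis from any vector-space basis. The construction takes the free $\mathbb{Q}$-space on formal generators $\{u_m : m \in M\}$ and imposes, for each $n$, a linear relation whose coefficients on $e(n)$ are algebraically independent rationals chosen to be generic relative to the enumeration so far; the point is not to represent the matroid faithfully, but to engineer $V$ so that the greedy basis of $V$ extracted by enumerating the $u_m$ in order coincides, element-for-element under the labeling $u_m \mapsto m$, with the greedy basis of $(M,e)$. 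Because the labeling is part of the presentation of $V$ and the recovery map is just ``for each output basis vector $u_m$, output $m$,'' the recovery does not need to re-read $(M,e)$, which is what strongness demands.

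Finally, chasing the cycle and invoking transitivity of $\leqsW$ delivers the four-way equivalence. The hardest step is genuinely the $\mathsf{EMB} \to \mathsf{VSB}$ direction, and the chief technical risk is ensuring that the ``generic coefficient'' construction preserves exactly the dependent sets enumerated by $e$ and no more; if genericity proves too delicate, a safer fallback is to route the reduction as $\mathsf{EMB} \leqsW \widehat{\mathsf{C}}_\nat \leqsW \mathsf{VSB}$, where the second step builds, in parallel, a direct-sum vector space whose factor bases decode individual $\mathsf{C}_\nat$ answers.
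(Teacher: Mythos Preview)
Your primary plan for $\mathsf{EMB} \leqsW \mathsf{VSB}$ has a genuine gap, and the paper avoids it by running that link in the opposite direction. There are two separate problems with the ``generic coefficient'' construction. First, even setting aside faithful representability, imposing one $\mathbb{Q}$-linear relation per enumerated dependent set will in general create \emph{new} dependencies in $V$ that are not present in the matroid: linear combinations of the imposed relations yield further relations, and there is no reason the resulting lattice of dependent sets in $V$ should have the same greedy basis as $(M,e)$. (Concretely, if $e$ lists $\{0,1,2\}$, $\{0,1,3\}$, $\{2,3,4\}$ as minimal dependent sets, your quotient forces $u_4$ into the span of $\{u_0,u_1\}$, whereas the matroid may well have $\{0,1,4\}$ independent.) Second, and more seriously for strongness, the solution to $\mathsf{VSB}$ is an \emph{arbitrary} basis of $V$, not the greedy one; a basis of $V$ may consist entirely of vectors like $u_0+u_1$ with no single $u_m$ appearing, so the recovery rule ``read off the subscript $m$'' is undefined. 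You would need to extract a matroid basis from any vector-space basis without access to $(M,e)$, and nothing in the construction guarantees this.

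Your fallback, routing through $\widehat{\mathsf{C}}_\nat$, is exactly what the paper does, but the paper organizes the cycle differently: it cites $\mathsf{GAC} \equivsW \widehat{\mathsf{C}}_\nat$ from~\cite{gura} and then proves $\mathsf{GAC} \leqsW \mathsf{EMB} \leqsW \widehat{\mathsf{C}}_\nat$ and, separately, $\widehat{\mathsf{C}}_\nat \leqsW \mathsf{VSB} \leqsW \mathsf{EMB}$. In particular, the paper takes the \emph{easy} direction $\mathsf{VSB} \leqsW \mathsf{EMB}$ (every vector space is canonically an e-matroid via enumeration of linearly dependent finite sets) rather than your hard direction. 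The step $\widehat{\mathsf{C}}_\nat \leqsW \mathsf{VSB}$ is handled by first observing $\mathsf{C}_\nat \leqsW \mathsf{VSB}$ via Simpson's construction and then proving $\widehat{\mathsf{VSB}} \leqsW \mathsf{VSB}$ by an effective direct sum, so that $\widehat{\mathsf{C}}_\nat \leqsW \widehat{\mathsf{VSB}} \leqsW \mathsf{VSB}$. Your other three reductions ($\widehat{\mathsf{C}}_\nat \leqsW \mathsf{GAC}$, $\mathsf{GAC} \leqsW \mathsf{EMB}$, $\mathsf{VSB} \leqsW \widehat{\mathsf{C}}_\nat$) are fine and broadly match the paper's arguments or the cited literature; just note that for $\mathsf{GAC} \leqsW \mathsf{EMB}$ you must first adjoin an edge when $G$ has none, as the paper does, so that the seed dependent pair exists.
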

\begin{proof}
Gura, Hirst, and Mummert~\cite{gura} proved that $\mathsf{GAC} \equivsW \widehat{\mathsf{C}}_\nat$.
Therefore, it is sufficient to establish the following four reductions:
\[
\mathsf{GAC} \leqsW \mathsf{EMB} \leqsW \widehat{\mathsf{C}}_\nat, 
\qquad
 \widehat{\mathsf{C}}_\nat \leqsW   \mathsf{VSB} \leqsW  \mathsf{EMB}.
\]
Three of these reductions are straightforward. First, to show that  $\mathsf{VSB} \leqsW \mathsf{EMB}$, 
modify the construction used to prove~\ref{b2a} implies~\ref{b2b} in Theorem~\ref{b2}.  Given a vector
space with vectors $V$, let $( V_i )_{i \in \nat}$ be an enumeration of all the finite subsets of $V$ in which
each subset appears infinitely often.  Define $e\colon \nat \to V^{<\nat}$ by setting $e(j) = V_j = \{ v_0 , \dots , v_k\}$ 
if there
is a sequence of field elements $\{ a_0 , \dots , a_k\}$ with canonical code less than $j$ such that
$\sum_{i\le k } a_i v_i = 0$, and set $e_j = \{ \vec 0 \}$ otherwise.  Because $e$ enumerates the finite
dependent subsets of $V$, it is easy to verify that
$(V,e)$ is a matroid and any basis for the matroid is a basis for the vector space.

Second, to show that $\mathsf{GAC} \leqsW \mathsf{EMB}$,
let $G= (V,E)$ be a graph. We wish to ensure that $G$ has at least one edge.
To this end, suppose $v_1 \in V$ and add a new vertex $v_0$ to $V$ and a new edge $(v_0, v_1)$ to $E$, 
yielding a graph $G' = (V', E')$. 
  Construct a matroid
$(V', e)$ as in the proof that~\ref{b2a} implies~\ref{b2b} in Theorem~\ref{b2}.  (Note that in that argument,
the bound on the number of components is used only to bound the rank of the matroid.)  As in that proof,
any basis for $(V',e)$ is a maximal set of disconnected vertices of $G'$.  If
$v_0$ is in the basis, it can be replaced by $v_1$ to form a new basis 
which is a maximal set of disconnected vertices of~$G$.

Third, to show that $\mathsf{EMB} \leqsW \widehat{\mathsf{C}}_\nat$, we 
let $M$ be a countable e-matroid with dependency function $e_M$. For each
finite set $F \subseteq M$, let $S_n$ be the set of all $n > 0$ such that, if 
$e_M(t) = F$ then there is a $t' < n$ with $e_M(t') = F$.  Let $f_n$ be a function
with $\operatorname{range}(f) = \nat \setminus S_n$. Then $(f_n)_{n \in \nat}$ is an instance of
$\widehat{\mathsf{C}}_\nat$. We can compute a basis for $M$, uniformly, from any solution to 
this instance. 

It remains to show that $\widehat{\mathsf{C}}_\nat \leqsW \mathsf{VSB}$. 
We adapt the construction presented by Simpson~\cite{simpson}*{Theorem~III.4.3} showing that the principle
``every countable vector space over $\mathbb{Q}$ has a basis'' is equivalent to $\mathsf{ACA}_0$ in the sense of Reverse Mathematics.  The proof presented by Simpson shows, more specifically, that given an injective function $f \colon \nat \to \nat$ we may uniformly compute a $\mathbb{Q}$-vector space $V_f$ such that the range of $f$ is uniformly computable from any basis of~$V_f$. This shows, in particular, that $\mathsf{C}_\nat \leqsW  \mathsf{VSB}$. 

To complete the proof, it is sufficient for us to verify that $\widehat{\mathsf{VSB}} \leqsW \mathsf{VSB}$,
because then we have $\widehat{\mathsf{C}}_\nat \leqsW \widehat{\mathsf{VSB}} \leqsW \mathsf{VSB}$.
The proof uses an effective direct sum construction. 
Given a sequence $(V_n)_{n \in \nat}$ of countable vector spaces, we may assume without loss of generality that their underlying sets of vectors are pairwise disjoint. We may then form a countable vector space $V$ whose elements are finite formal $\mathbb{Q}$-linear combinations of the form
\[
a_1 u_1 + \cdots + a_m u_m 
\]
where $a_i \in \mathbb{Q}$ and $u_i \in V_i$ for $i \leq m$.  The scalar multiplication on $V$ is the obvious one, and the vector addition is so that 
\[
\left ( \sum_{i \leq m} a_i u_i \right ) + \left ( \sum_{i \leq n} b_i v_i \right ) 
= \sum_{i \leq \max{m,n}} (a_i u_i + b_i v_i) 
\]
where each addition $a_iu_i + b_iv_i$ is carried out in $V_i$, and terms that did not appear in the left
are treated vacuously as zero vectors.  Then $V$ is a countable vector space that is uniformly computable
from the sequence $(V_n)_{n\in \nat}$.  Moreover, if $B$ is a basis of $V$ then $B \cap V_i$ is a basis of $V_i$ for each~$i \in \nat$. To see this, note that on one hand $B \cap V_i$ must span $V_i$ for each~$i$, and on the
other hand any dependency of the set $B \cap V_i$ within $V_i$ would induce a dependency of $B$ within $V$. 
\end{proof}

We next consider the restricted versions of two principles from Theorem~\ref{w1}.

\begin{theorem}\label{w2} For $n \geq 2$,  the following equivalences hold:
\[
\mathsf{GAC}_n \equivsW \mathsf{EMB}_n \equivsW \mathsf{C}_\nat.
\]
\end{theorem}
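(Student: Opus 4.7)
The plan is to close the cycle
\[
\mathsf{GAC}_n \leqsW \mathsf{EMB}_n \leqsW \mathsf{C}_\nat \leqsW \mathsf{GAC}_n,
\]
from which both equivalences follow by transitivity of $\leqsW$. The first two reductions are comparatively routine. For $\mathsf{GAC}_n \leqsW \mathsf{EMB}_n$, I will reuse the construction from the $\mathsf{GAC}\leqsW\mathsf{EMB}$ portion of Theorem~\ref{w1}, noting that if the input graph has exactly $n$ components then the resulting e-matroid has rank $n$, so every basis is a maximal antichain. For $\mathsf{EMB}_n \leqsW \mathsf{C}_\nat$, I will use that in a rank-$n$ e-matroid a size-$n$ subset $X\subseteq M$ is a basis exactly when it is independent, equivalently when $X\notin\operatorname{range}(e)$; hence the codes of basis sets form a nonempty co-c.e.\ subset of the codes of size-$n$ sets, and a single call to $\mathsf{C}_\nat$ extracts one such code.

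The hard part is $\mathsf{C}_\nat \leqsW \mathsf{GAC}_n$: the reduction must produce a graph with \emph{exactly} $n$ components despite having no a priori control over $|S|$. Given an enumeration $f$ of $\nat\setminus S$, I will construct $G_f$ dynamically: the graph has $n-2$ marker vertices $a_1,\dots,a_{n-2}$ (never incident to any edge), an ``absorber'' vertex $u$, and vertices $v_m$ added on demand, starting with $v_0$ at stage $0$. At each stage $s\geq 1$, I compute $g_s = \min(\nat\setminus\{f(0),\dots,f(s-1)\})$ and create the vertex $v_{g_s}$ if it does not yet exist; if $f(s-1)=m$ and $v_m$ has already been created, I add the edge $(v_m,u)$.

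Since $g_s$ is nondecreasing and converges to $\min S$, the vertex $v_{\min S}$ is eventually created and is never adjacent to $u$ (because $\min S\notin\operatorname{range}(f)$), while every other created $v_m$ satisfies $m<\min S$, so $m$ is eventually enumerated and the edge $(v_m,u)$ is eventually added. Hence $G_f$ has exactly $n$ components: the absorbing component $\{u\}\cup\{v_m : m<\min S \text{ and }v_m \text{ was created}\}$, the singleton $\{v_{\min S}\}$, and the $n-2$ marker singletons. For the post-processing I will take $\Psi(A)$ to be the largest $m$ such that $v_m\in A$. Any maximal antichain $A$ of $G_f$ must contain $v_{\min S}$ and each $a_i$ (all being isolated), together with one representative of the absorbing component (either $u$ or some $v_{m'}$ with $m'<\min S$); so the $v$-indices present in $A$ form either $\{\min S\}$ or $\{m',\min S\}$ with $m'<\min S$, and either way $\Psi(A)=\min S\in S$. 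Since $\Psi$ does not refer to $f$, this is a strong Weihrauch reduction, and the cycle closes.
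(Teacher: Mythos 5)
Your reductions $\mathsf{GAC}_n \leqsW \mathsf{EMB}_n$ and $\mathsf{EMB}_n \leqsW \mathsf{C}_\nat$ are essentially the paper's: the first reuses the Theorem~\ref{w1} construction with the observation that the resulting e-matroid's bases have size equal to the number of components, and the second exploits the fact that the independent size-$n$ subsets form a nonempty co-c.e.\ class each member of which is a basis (the paper packages the same idea as a stabilizing sequence $F_t$ of candidate $n$-sets and applies $\mathsf{C}_\nat$ to the complement of the set of stabilization points). Where you diverge is the third leg: the paper does not prove $\mathsf{C}_\nat \leqsW \mathsf{GAC}_n$ at all, but instead cites Gura--Hirst--Mummert for $\mathsf{GAC}_n \equivsW \mathsf{C}_\nat$ and closes the loop through that equivalence. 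Your decision to prove this leg from scratch is where the trouble lies.

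The gap is that your graph $G_f$ is not a legitimate instance. In your construction the vertex $v_m$ exists if and only if $m$ is a value taken by the candidate sequence $(g_s)$, and this set of indices is not uniformly computable from $f$: for $m > \min S$ the nondecreasing sequence $(g_s)$ never reaches or passes $m$, so no finite computation certifies that $v_m$ will never be created; the vertex set is only c.e.\ relative to $f$. Since an instance of $\mathsf{GAC}_n$ is a graph $(V,E)$ with $V$ and $E$ given as sets, your $\Phi$ is not a computable functional into $\operatorname{dom}(\mathsf{GAC}_n)$. (Renumbering the created vertices consecutively in order of creation does not help: the vertex set becomes an initial segment of $\nat$ of undecidable finite length, and $\Psi$ could then no longer recover the index $m$ from a vertex name without consulting $f$.) The standard repair is to index the candidate vertices by stage rather than by value, say as pairs $(s,g_s)$ --- a decidable set --- joining $(s,g_s)$ to $(s+1,g_{s+1})$ when $g_{s+1}=g_s$ and to the absorber $u$ when $g_{s+1}\neq g_s$; the unique non-absorbed component is then an infinite chain whose vertices carry $\min S$ as their second coordinate, which $\Psi$ reads off from the $n$ elements of the antichain. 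Alternatively, simply cite the prior equivalence as the paper does. Your instinct to create vertices only for candidate values, so that the maximum $v$-index appearing in the antichain equals $\min S$ without consulting $f$, is addressing a genuine issue for \emph{strong} reducibility, but the implementation sacrifices computability of the instance.
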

\begin{proof}
Let $n \geq 2$ be fixed for the remainder of this proof. 
Gura, Hirst, and Mummert~\cite{gura}*{Theorem 6.6} proved that $
\mathsf{GAC}_n \equivsW \mathsf{C}_\nat
$.
Therefore, it is sufficient to establish the reductions 
$
\mathsf{GAC}_n \leqsW \mathsf{EMB}_n$ and $\mathsf{EMB}_n \leqsW \mathsf{C}_\nat$. 

The reduction $\mathsf{GAC}_n \leqsW \mathsf{EMB}_n$ follows from the proof of 
Theorem~\ref{w1}, because the construction there produces 
an e-matroid whose dimension is the same as the number of components of the graph. 

To show that $\mathsf{EMB}_n \leqsW \mathsf{C}_\nat$, let $M$ be an 
e-matroid on the set $\nat$ with some basis of size~$n$.  
Consider a sequence $(F_i)_{i \in \nat}$ of finite subsets of $M$ so
that $F_{t}$ consists of the first element
of $\nat^n$ (under the lexicographical order on increasing sequences read right to left)
that is not one of the sets
$e(i)$ for any~$i < t$. Thus $F(0) = \{0, \ldots, n-1\}$, and $F_{t+1}$ will
differ from $F(t)$ exactly when $e(t) = F_t$. Note that, because there is an independent
set of size $n$, there will be a $t$ such that $F_s = F_t$ for all $s > t$. 
Let $S$ be the set of all $t > 0$ for which $F_s = F_t$ for all $s > t$ and let
$f$ be a function whose range is the complement of $S$. We may apply
$\mathsf{C}_\nat$ to $f$ to find a $t \in S$; then $F_t$ is a basis for~$M$. 
\end{proof}

The next lemma, which is well known, extends the list of principles in Theorem~\ref{w2} slightly,
simplifying the proof of the next theorem.

\begin{lem}\label{Q14A}
Let $\mathsf{C}^u_\nat$ denote the restriction of $\mathsf{C}_\nat$ to functions for which
the complement of the range consists of a unique natural number.  Then
$\mathsf{C}^u_\nat \equivsW \mathsf{C}_\nat$.
\end{lem}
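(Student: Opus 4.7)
The plan is to prove both directions. The direction $\mathsf{C}^u_\nat \leqsW \mathsf{C}_\nat$ is immediate, since $\mathsf{C}^u_\nat$ is merely a restriction of $\mathsf{C}_\nat$: any instance $f$ of $\mathsf{C}^u_\nat$ is already an instance of $\mathsf{C}_\nat$, and any element not in $\operatorname{range}(f)$ must equal the unique missing element, so $\Phi$ and $\Psi$ can both be taken as the identity. The content of the lemma is the reverse reduction $\mathsf{C}_\nat \leqsW \mathsf{C}^u_\nat$.

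For that reduction, I intend to take $\Psi$ to be the first projection under a fixed pairing $\nat \cong \nat \times \nat$, and to construct $\Phi$ so that the unique pair missing from $\operatorname{range}(\Phi(f))$ has first coordinate $n_0 := \min(\nat \setminus \operatorname{range}(f))$. Given $f$, at each stage $s$ I would compute $a^s = \min(\nat \setminus \{f(0), \ldots, f(s)\})$, the current least-missing approximation, together with $s_a^s = \min\{t \leq s : a^t = a^s\}$, the first stage at which the approximation attained its present value. The enumeration $g = \Phi(f)$ would dovetail its outputs so that at stage $s$ it enumerates every pair $(m, k)$ with $m, k \leq s$ except the ``reserved'' pair $(a^s, s_a^s)$.

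The main step is to verify that $\nat \setminus \operatorname{range}(g) = \{(n_0, s_0)\}$, where $s_0$ is the first stage at which $a^s = n_0$. For any $(m, k) \neq (n_0, s_0)$, the stage $t = \max(m, k, n_0, s_0)$ satisfies $t \geq s_0$, forcing $(a^t, s_a^t) = (n_0, s_0) \neq (m, k)$, so $(m, k)$ is enumerated at stage $t$. Conversely, for every $t \geq s_0$ the reserved pair is permanently $(n_0, s_0)$, so this pair never enters $\operatorname{range}(g)$. Applying $\Psi$ to the unique missing pair then returns $n_0 \in \nat \setminus \operatorname{range}(f)$, a valid $\mathsf{C}_\nat$-solution.

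The hardest aspect of this plan is choosing the second coordinate of the reservation correctly. A naive choice like $(a^s, L^s)$, where $L^s$ counts how often the sequence $a^s$ has changed, fails, because the target pair $(n_0, L^\infty)$ may have small coordinates and be enumerated at an early stage---well before $a^s$ reaches $n_0$. I expect that using $s_a^s$ is what makes the construction succeed, since this forces the second coordinate of the target to equal its stabilization stage, protecting it from premature enumeration during the approximation phase.
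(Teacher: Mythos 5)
Your construction is correct and is essentially the paper's own argument: the paper also proves $\mathsf{C}_\nat \leqsW \mathsf{C}^u_\nat$ by a moving-marker construction over coded pairs, reserving a pair whose first coordinate is the current least value missing from $\operatorname{range}(f)$ and whose second coordinate is chosen fresh so that the final reserved pair is never prematurely enumerated, then recovering the answer by projection. The only difference is bookkeeping: the paper enumerates the least not-yet-used code at each non-moving stage and dumps the old marker when it moves, while you enumerate all pairs with coordinates below the stage bound except the reserved pair $(a^s, s_a^s)$.
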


\begin{proof}
Because $\mathsf{C}^u_\nat$ restricts $\mathsf{C}_\nat$ to a smaller class of inputs,
$\mathsf{C}^u_\nat \leqsW \mathsf{C}_\nat$.  To prove $\mathsf{C}_\nat \leqsW \mathsf{C}^u_\nat$,
suppose $f \colon \nat \to \nat$ is not surjective.  In the following construction, we will conflate
the pair $(i,j)$ with its integer code via a fixed bijection between $\nat$ and $\nat \times \nat$.
Define $g\colon \nat \to \nat$ by the following moving marker construction.  Let $m_0 = (0,0)$ be the initial
marker.  Suppose $m_k = (m^0_k , m^1_k )$ has been defined.  If $f(k) \neq m^0_k$, set
$m_{k+1} = m_k$ and set $g(k)$ to the least code for a pair not included in
$\{g(j) : j<k \}$.  If $f(k)=m^0_k$, define a pair $(y_0, t_0)$ so that  
\[
\begin{split}
y_0 & = (\mu \, y \le k+1)(\forall j \le k)[f(j) \neq y)],\\
t_0 &= (\mu \, t)( \forall j<k)[g(j) \neq (y_0, t)],
\end{split}
\]
and then set $m_{k+1} = (y_0 , t_0 )$ and $g(k) = m_k$.

Intuitively, if $y$ is the smallest natural number not in the range of $f$, then at some stage in the construction
the marker is set to $(y,n)$ for some~$n$, and does not move after that point.  The code $(y,n)$ is not in the
range of~$g$, but every other code and consequently every other natural number is in the range of~$g$.
Thus $g$ satisfies the input requirements for $\mathsf{C}^u_\nat$, and the process yields $(y,n)$ as an output.
The number $y$ (retrievable by a projection function) is a solution to $\mathsf{C}_\nat$ for input~$f$.
\end{proof}

The following theorem adds the fixed dimension vector space basis problem to the list of equivalent problems
of Theorem~\ref{w2}

\begin{theorem}\label{Q14B}
For $n\ge 2$, $\mathsf{VSB}_n \equivsW \mathsf{C}_\nat$.
\end{theorem}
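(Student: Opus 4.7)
The plan is to prove the two strong Weihrauch reductions $\mathsf{VSB}_n \leqsW \mathsf{C}_\nat$ and $\mathsf{C}_\nat \leqsW \mathsf{VSB}_n$ separately. Lemma~\ref{Q14A} lets us replace the second reduction by $\mathsf{C}^u_\nat \leqsW \mathsf{VSB}_n$, so we may assume the $\mathsf{C}_\nat$ instance $f$ omits a unique value $y \in \nat$ from its range.

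The forward direction closely mirrors the proof of $\mathsf{EMB}_n \leqsW \mathsf{C}_\nat$ in Theorem~\ref{w2}. Given a $\mathbb{Q}$-vector space $V$ of dimension $n$, we would fix a canonical enumeration $(T_i)_{i \in \nat}$ of all $n$-tuples of vectors from $V$, and enumerate the indices $i$ for which $T_i$ is linearly dependent; this is a $\Sigma^0_1$ condition, witnessed by a nontrivial rational combination that the c.e.~congruence identifies with~$0$. The complement of this c.e.~set is nonempty because $\dim V = n$, and an application of $\mathsf{C}_\nat$ returns an index $i_0$ such that $T_{i_0}$ is linearly independent, hence a basis.

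For the reverse direction, we would adapt the construction used in Theorem~\ref{w1} (and originally in Simpson~\cite{simpson}*{Theorem~III.4.3}) to fixed dimension~$n$. Take the free $\mathbb{Q}$-vector space on formal generators $\{v_k : k \in \nat\} \cup \{b_1, \ldots, b_{n-1}\}$, and at each stage $s$ enumerate the relation $v_{f(s)} = b_1$ in the c.e.~congruence. In the resulting quotient $V_f$, every $v_k$ with $k \in \operatorname{range}(f)$ collapses to $b_1$ while $v_y$ remains free, so $\{v_y, b_1, \ldots, b_{n-1}\}$ forms a basis of size~$n$ and $V_f$ is a valid $\mathsf{VSB}_n$ instance.

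Given any basis $B = \{c_1, \ldots, c_n\}$ of $V_f$, we recover $y$ uniformly. For each $k$, dovetail a search through the enumerated congruence to find coefficients $(\alpha_{k,i})$ and $(\beta_{j,i})$ witnessing $v_k \equiv \sum_i \alpha_{k,i} c_i$ and $b_j \equiv \sum_i \beta_{j,i} c_i$ in $V_f$; these are unique in the quotient and are eventually enumerated because $B$ spans. The decidable rational linear algebra question of whether $(\alpha_{k,\cdot})$ lies in the $\mathbb{Q}$-span of the $(\beta_{j,\cdot})$ identifies $y$ as the unique $k$ for which the answer is negative. The main obstacle is confirming that $V_f$ genuinely has dimension $n$: one must rule out unintended collapses among $\{v_y, b_1, \ldots, b_{n-1}\}$, but since every enumerated relation $v_k - b_1$ has its $v$-component at some $k \in \operatorname{range}(f) \ne y$ and no component at any $b_j$ with $j > 1$, a direct coefficient comparison in the free space confirms the dimension.
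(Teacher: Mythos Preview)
Your forward reduction $\mathsf{VSB}_n \leqsW \mathsf{C}_\nat$ is fine; it is a direct version of the paper's argument, which instead routes through $\mathsf{VSB}_n \leqsW \mathsf{EMB}_n \leqsW \mathsf{C}_\nat$ using Theorem~\ref{w1} and Theorem~\ref{w2}.

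The backward reduction has a genuine gap. An instance of $\mathsf{VSB}_n$ is a countable vector space in the usual sense of Simpson~\cite{simpson}, i.e.\ with a \emph{decidable} equality relation; this is why the paper's proof of Theorem~\ref{Q14B} pauses to verify explicitly that the subspace $U_0$ is computable from $f$ before forming the quotient. Your $V_f$ does not satisfy this. The congruence generated by $\{v_{f(s)} - b_1 : s \in \nat\}$ is only c.e.: deciding whether a formal sum $\sum_k \alpha_k v_k + \sum_j \gamma_j b_j$ represents $0$ requires knowing, for each $k$ in its support, whether $k \in \operatorname{range}(f)$, and that is exactly the information you are trying to extract. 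So $V_f$ is a c.e.~presented space, not a valid $\mathsf{VSB}_n$ instance, and the reduction collapses.

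The paper (following Simpson~\cite{simpson}*{Theorem~III.4.2}) avoids this by imposing the relations $x_{2f(m)} + (m+1)\,x_{2f(m)+1} = 0$ rather than $v_{f(m)} = b_1$. The point of the odd-looking coefficient $(m+1)$ is that it encodes the preimage $m$ into the ratio $q_{2n+1}/q_{2n}$, so membership in the span $U_0$ can be tested by the explicit arithmetical formula displayed in the proof, without searching through the range of~$f$. A second computable quotient by the even-indexed generators then cuts the dimension down to~$2$, and the missing value is recovered from the odd indices occurring in any basis.
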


\begin{proof}
By Theorem~\ref{w2}, $\mathsf{EMB_n} \leqsW \mathsf{C}_\nat$.  In the proof of Theorem~\ref{w1},
the argument showing $\mathsf{VSB} \leqsW \mathsf{EMB}$ preserves the dimension of input
vector space, and so shows $\mathsf{VSB}_n \leqsW \mathsf{EMB}_n$.  By transitivity,
$\mathsf{VSB}_n \leqsW\mathsf{C}_\nat$.

Next we will show that $\mathsf{C}^u_\nat \leqsW \mathsf{VSB}_2$.  Our proof uses
ideas and notation from the proof of Theorem III.4.2 of Simpson~\cite{simpson}.
Fix $f\colon \nat\to \nat$ with the range of $f$ including all of $\nat$ except for one value.
Let $V_0$ be the set of all formal sums $\sum_{i \in I} q_i x_i$ with $I$ finite
and $0\neq q_i \in \mathbb Q$.  We can identify formal sums with their
sequence codes, yielding a well-ordering on $V_0$.  Without loss of generality,
we may assume that $x_i$ is minimal in this ordering among all vectors
with a nonzero coefficient on $x_i$.  As in Simpson's proof, let
$x_m^\prime = x_{2f(m)}+ (m+1)x_{2f(m)+1}$ and $X^\prime = \{x_m^\prime :  m \in \nat \}$.
Let $U_0$ denote the subspace consisting of the linear span of $X^\prime$.
Note that $\sum_{i \in I} q_i x_i \in U_0$ if and only if
\[
(\forall n) \left[ (q_{2n} \neq 0 \to f(q_{2n+1}/q_{2n}-1) = n)
\land
(q_{2n} = 0 \to q_{2n+1} = 0) \right],
\]
so $U_0$ is computable from $f$.  Let $V_1$ be $V_0 / U_0$, where a vector $v$ is in $V_1$ if and only if
it is the element of $\{v-u : u \in U_0 \}$ which is least in the well ordering on $V_0$.  Only finitely
many sequence codes are less than the code for $v$, so $V_1$ is computable.

By our choice of ordering and the construction of $U_0$, for every $i\in \nat$, $x_{2i} \in V_1$.
Let $U_1$ be the linear span of $\{ x_{2i} : i \in \nat \}$ in $V_1$.  Then $U_1$ is a vector
subspace of $V_1$ computable from $f$, and we may construct the quotient space
$V= V_1 / U_1$, using minimal representatives as before.  For any $j \in \nat$,
\[
x_0 = x_{2f(j)+1} - (-\frac{1}{j+1} x_{2f(j)} - x_0 ) - \frac{1}{j+1}(x_{2f(j)} + (j+1) x_{2f(j) + 1}).
\]
The vector $-\frac{1}{j+1} x_{2f(j)} - x_0$ is in $U_1$ and $\frac{1}{j+1}(x_{2f(j)} + (j+1) x_{2f(j) + 1})$ is in $U_0$,
so $x_0$ and $x_{2f(j)+1}$ correspond to the same vector in $V$.  The range of $f$ excludes only
one element, so the dimension of $V$ is $2$.  Let $\{v_1 , v_2\}$ be a basis for $V$.
Let $P$ be the finite collection of odd indices in the formal sums for $v_1$ and $v_2$, and let
$R = \{m : 2m+1 \in P \}$.  Exactly one $m$ in $R$ does not appear in the range of $f$.
Thus, for exactly one $m$ in $R$, $\{ x_0 , x_{2m+1}\}$ is linearly independent.
Sequentially enumerate linear combinations of the form $q_0 x_0 + q_1 x_{2m+1}$, ejecting values from $R$
corresponding to linear combinations that equal $0$ in $V$.  The last value left in $R$ is
the sole natural number
that is not in the range of $f$.  Thus $\mathsf C^u_\nat \leqsW \mathsf {VSB}_2$.
By Lemma~\ref{Q14A}, $\mathsf{C}_\nat \leqsW \mathsf {VSB}_2$.

To prove $\mathsf{C}_\nat \leqsW \mathsf{VSB}_n$ for $n>2$, add $n-1$ dummy vectors to the the basis of $V_0$
in the preceding argument.
\end{proof}

The reduction of $\mathsf{EMB}_n$ to $\mathsf{C}_\nat$ in the proof of Theorem~\ref{w2} relies heavily on knowing the precise dimensions (in the appropriate sense) of the objects being studied. This suggests a variation in which we only place an upper bound on their  dimensions.  We begin with definitions of bounded versions of some Weihrauch principles.

\newcommand{\Cmax}{\mathsf{C}^{\subset}_{\text{max}}}
\newcommand{\Ccmax}{\mathsf{C}^{\#}_{\text{max}}}

\begin{defn}
We define the following Weihrauch principles. 
\begin{itemize}
\item $\mathsf{EMB}_{<\omega}$: the bounded e-matroid basis problem.
\begin{align*}
\mathsf{EMB}_{<\omega } = \{ (n,M,B) :~ &n\in\nat,~M \text{ is an e-matroid,}
\operatorname{rank}(M)\le n,\\
& \text{and }  B \text{ is a basis for }M\}
\end{align*}

\item
$\mathsf{GAC}_{<\omega }$:  The bounded graph antichain problem.  Letting 
$\operatorname{Max}(G)$ be the set of maximal antichains of $G$, we have
\begin{align*}
\mathsf{GAC}_{<\omega }= \{(n,G,A)~:~& n\in \nat,~G \text{ is a graph,}\\
&\text{each set of } n \text { vertices has a path connected pair,}\\
&\text{and } A \in \operatorname{Max}(G)\}
\end{align*}
\item
$\Cmax$:  Picking a maximal element (relative to the containment partial ordering)
in the complement of an enumeration of finite nonempty sets whose range includes all sets larger than some bound.
\begin{align*}
\Cmax =\{(n,f,X)~:~&n \in \nat,~f\colon\nat\to [\nat]^{<\nat}_{\neq\emptyset},~X\in [\nat]^{<\nat},\\  
&\operatorname{range}(f) \text{~includes all sets of cardinality}\ge n,\\
& X \notin\operatorname{range}(f), \text{ and }\\
& (\forall Y \in \nat^{<\nat})[Y \supsetneq X \to Y\in \operatorname{range}(f)]\}
\end{align*}
\item
$\Ccmax$:  Picking an element of maximal cardinality in the complement of an enumeration of finite nonempty sets
whose range includes all sets larger than some bound.
\begin{align*}
\Ccmax=\{(n,f,X)~:~&n \in \nat,~f\colon\nat\to [\nat]^{<\nat}_{\neq\emptyset},~X\in [\nat]^{<\nat},\\  
&\operatorname{range}(f) \text{~includes all sets of cardinality}\ge n,\\
& X \notin\operatorname{range}(f), \text{ and }\\
& (\forall Y \in \nat^{<\nat})[|Y|>|X| \to Y\in \operatorname{range}(f)]\}
\end{align*}

\end{itemize}
\end{defn}

\begin{theorem}\label{w3} The following strong Weihrauch equivalences hold:
\[
\mathsf{EMB}_{<\omega }
\equivsW
\mathsf{GAC}_{<\omega }
\equivsW
\Cmax
\equivsW
\Ccmax
\]
\end{theorem}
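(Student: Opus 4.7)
The plan is to close a cycle of strong Weihrauch reductions:
\[
\mathsf{EMB}_{<\omega}
\leqsW
\Cmax
\leqsW
\Ccmax
\leqsW
\mathsf{GAC}_{<\omega}
\leqsW
\mathsf{EMB}_{<\omega}.
\]
Three of these follow from ideas already present in the paper; the fourth, $\Ccmax\leqsW\mathsf{GAC}_{<\omega}$, is the main obstacle.

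For $\mathsf{EMB}_{<\omega}\leqsW\Cmax$, send $(n,M,e)$ directly to $(n,e)$. The rank-$\le n$ hypothesis, together with axiom~(2) of the e-matroid definition, forces every finite subset of $M$ of cardinality at least $n$ to appear in the range of $e$, so $(n,e)$ is a valid $\Cmax$ instance; by matroid exchange any containment-maximal independent set is a basis, and the post-processing is the identity. The reduction $\Cmax\leqsW\Ccmax$ is the identity map, since any cardinality-maximum element of the complement of $f$ is automatically containment-maximal there. And $\mathsf{GAC}_{<\omega}\leqsW\mathsf{EMB}_{<\omega}$ is the graph-to-matroid construction used in the proof of Theorem~\ref{b2} to show that~\ref{b2a} implies~\ref{b2b}: the e-matroid whose independent sets are the antichains of $G$ has rank at most $n-1$, and its bases are exactly the maximal antichains of $G$, so the reduction is strong.

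The main obstacle is the fourth reduction, $\Ccmax\leqsW\mathsf{GAC}_{<\omega}$. Given $(n,f)$, I need a graph $G$ whose maximal antichains correspond to cardinality-maximum elements of the complement of $f$, and whose hypothesis — every $n$-tuple of vertices has a path-connected pair — must be uniformly verified from $f$. A naive construction on vertex set $\nat$ that connects $a$ and $b$ whenever some $f(t)$ contains both creates too many edges, because an unenumerated pair $\{a,b\}$ becomes path-connected through any enumerated superset of $\{a,b\}$. To avoid this, I plan to use auxiliary vertices indexed by the stage $t$ at which each set is enumerated, so that distinct enumerated sets merge their members only through disjoint gadgets, and additionally to enlarge the vertex set so that the vertices encode candidate subsets of $\nat$ of cardinality at most $n$ rather than individual elements, with edges coming from proper-subset relations among enumerated subsets. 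The hardest part will be rigorously verifying that the resulting graph has at most $n-1$ connected components and that the extraction map from a maximal antichain back to a $\Ccmax$-solution is uniformly computable. Because the arbitrary enumeration $f$ carries no built-in matroid structure and the maximum cardinality of the complement of $f$ is a $\Sigma^0_2$ datum relative to $f$, this bookkeeping — ensuring monotonicity of the edge-enumeration and preservation of cardinality-maximality under the extraction — is where the principal technical work will lie.
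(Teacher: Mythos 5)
Your cycle of four reductions is exactly the paper's, but the decisive one, $\mathsf{C}^{\#}_{\mathrm{max}} \leq_{\textup{sW}} \mathsf{GAC}_{<\omega}$, is not actually proved: you flag it as ``where the principal technical work will lie'' and offer only a sketch, and that sketch has a concrete problem it does not address. If the vertices of your graph encode candidate subsets of cardinality less than $n$, with edges induced by subset relations among enumerated sets, then the components of the graph track the individual omitted sets; but the complement of the range of $f$ can be infinite (for instance $f$ may enumerate exactly the sets of cardinality at least $n$, omitting every smaller nonempty set), so such a graph can have infinitely many components and will not satisfy the $\mathsf{GAC}_{<\omega}$ hypothesis for any bound. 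The paper avoids this by building, for each cardinality $b$ with $1 \le b < n$, a ``moving marker'' gadget: a ray of vertices $u^b_j$ together with a chain of vertices $v^b_j$ that is spliced onto the $u$-ray whenever the currently tracked size-$b$ set $g_b(k^b_j)$ shows up in the range of $f$ (at which point the marker advances to the next size-$b$ candidate). Each gadget thus has one component if every size-$b$ set is enumerated and exactly two if some size-$b$ set is omitted, so the whole graph has at most $2(n-1)$ components; from a maximal antichain one reads off the largest $b_0$ whose gadget has split into two components, and the marker value at the largest $v^{b_0}$-vertex in the antichain gives a cardinality-maximal omitted set. Some mechanism of this kind, bounding the number of components independently of how many sets $f$ omits, is essential and is missing from your plan.

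There is also a flaw in your first reduction. Sending $(n,M,e)$ directly to $(n,e)$ does not produce a valid $\mathsf{C}^{\subset}_{\mathrm{max}}$ instance when $M$ is a proper subset of $\nat$: the definition of $\mathsf{C}^{\subset}_{\mathrm{max}}$ requires the range of the input to contain \emph{every} subset of $\nat$ of cardinality at least $n$, whereas $e$ enumerates only subsets of $M$; and even setting that aside, a containment-maximal set omitted by $e$ need not be contained in $M$. The paper's proof of Theorem~\ref{w3} repairs this by interleaving into the enumeration all finite sets meeting the complement of $M$, after which any containment-maximal omitted set lies in $M$, is independent (using closure of the range of $e$ under supersets), and spans, hence is a basis. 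A small further slip: the graph-to-matroid construction yields an e-matroid of rank at most $n$ (every $n$-element vertex set is dependent), not $n-1$, though this does not affect the argument. Your remaining two reductions, the identity reduction $\mathsf{C}^{\subset}_{\mathrm{max}} \leq_{\textup{sW}} \mathsf{C}^{\#}_{\mathrm{max}}$ and $\mathsf{GAC}_{<\omega} \leq_{\textup{sW}} \mathsf{EMB}_{<\omega}$ via the construction from Theorem~\ref{b2}, agree with the paper.
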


\begin{proof}
We will prove each of the following reductions, proceeding from right to left.
\[
\Cmax \leqsW
\Ccmax \leqsW
\mathsf{GAC}_{<\omega }\leqsW
\mathsf{EMB}_{<\omega }\leqsW
\Cmax
\]
To prove \(\mathsf{EMB}_{<\omega }\leqsW \Cmax\),
suppose $(M,e)$ is an e-matroid such that every subset of $M$ of size at least $n$ is in the range
of $e$.  Let $\{X_j : j \in \nat\}$ be an enumeration of $[\nat]^{<\nat}$ and let $(i,j)$ denote the
output of a bijective pairing function.  Note that every $m\in \nat$ has a unique representation
of the form $2(i,j)+\varepsilon$ where $i,j \in \nat$ and $\varepsilon \in \{0,1\}$.  
Define $f\colon \nat \to [\nat]^{<\nat}$ by
\[
f(2(i,j)+ \varepsilon)=
\begin{cases}
X_j \text{ if }\varepsilon= 0 \land i\notin M \land i \in X_j, \text{ and}\\
e((i,j)) \text{ otherwise.}
\end{cases}
\]
The range of $f$ consists of the range of $e$ plus all finite sets containing any elements of the complement of~$M$.
Apply $\Cmax$ to $f$ to obtain a finite set $B\subseteq \nat$ in the complement of the range of $f$
that is maximal with respect to the containment partial ordering.  The range of $f$ includes all finite sets containing
elements of the complement of $M$, so $B \subseteq M$.  Furthermore, the range of $f$ includes the range of $e$,
so $B$ is independent in $(M,E)$.  By maximality of $B$, $B$ spans $(M,e)$, so $B$ is a basis for $(M,e)$.

To prove $\mathsf{GAC}_{<\omega }\leqsW
\mathsf{EMB}_{<\omega }$, emulate the reduction of $\mathsf{GAC}$ to $\mathsf{EMB}$ from the proof of
Theorem~\ref{w1}.  Because $G$ has at most $n$ connected components,
every set of $n+1$ elements in the related matroid is dependent and so appears in the range of the enumeration.

To prove $\Ccmax\leqsW
\mathsf{GAC}_{<\omega }$, suppose $f\colon \nat \to [\nat]^{<\nat}_{\not = \emptyset}$ and the range of $f$
includes all finite subsets of cardinality at least $n$.  For each $b$ with $1\leq b<n$, let $g_b \colon \nat \to \nat^{<\nat}$ be an enumeration
of all subsets of $\nat$ of cardinality exactly~$b$.  We will construct a graph $G$ consisting of $n-1$ subgraphs
each with one or two connected components.  The vertices of $G$ are
$\{ u^b _j , v^b_j : 1 \leq b < n \land j \in \nat \}$.  For each $b$ with $1 \leq b<n$ and each $j \in \nat$,
add the edge $(u^b_j , u^{b}_{j +1})$ to the edge set $E$ of $G$.  For each $b$ with $1\leq b < n$,
define $k^b_0 = 0$.  Suppose $k^b_j$ is defined.
If $(\exists t \leq j)[ f(t) = g_b (k^b_j )]$, add $(v^b_j , u^b_{j})$ to $E$ and set $k^b_{j+1} = k^b_j + 1$.
Otherwise, if $(\forall t\leq j)[f(t) \neq g_b(k^b_j)]$, add $(v^b_j , v^b_{j+1})$ to $E$ and set $k^b_{j+1} = k^b_j$.
Note that the graph $G$ is uniformly computable from $f$.

Apply $\mathsf{GAC}_{<\omega }$ to find a maximal (finite) antichain $D$ in $G$.
Let $b_0$ be the largest number less than $n$ such that $D$ contains two vertices with superscript~$b_0$.
(If no such $b_0$ exists, $\emptyset$ is the largest set in the complement of the range of $f$.)
At least one of these vertices must be $v_j^{b_0}$ for some $j$.  Let $j_0$ be the largest value such that
$v^{b_0}_{j_0} \in D$.  Then $g_{b_0} ( k^{b_0}_{j_0} )$ is a set of maximal cardinality in the complement of the range of $f$.

To conclude the proof, we need only show that $\Cmax \leqsW
\Ccmax$.  Any $f$ and $n$ satisfying the hypotheses of $\Cmax$ also satisfy those
of $\Ccmax$.  Any subset in the complement of the range of $f$ that is maximal in cardinality is also maximal
with respect to the containment partial ordering, so the identity functionals witness the desired reduction.
\end{proof}

We close our discussion of Weihrauch reducibility with the following theorem that adds $\mathsf{VSB}_{<\omega}$ to 
the equivalences of Theorem~\ref{w3}.  Here $\mathsf{VSB}_{<\omega}$ is the problem which, given an input of $n\in \nat$
and a vector space in which every set of $n$ vectors is linearly dependent, returns a basis for the vector space.

\begin{theorem}\label{Q17A}
$\mathsf{VSB}_{<\omega} \equivsW \Cmax$.
\end{theorem}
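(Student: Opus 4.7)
The plan is to prove each direction of the claimed equivalence separately. For the upward reduction $\mathsf{VSB}_{<\omega} \leqsW \Cmax$, I would observe that the construction of $\mathsf{VSB} \leqsW \mathsf{EMB}$ in the proof of Theorem~\ref{w1} sends a vector space to an e-matroid whose dependent sets are exactly the linearly dependent subsets of that space. Therefore a rank bound is preserved: if every $n$ vectors of $V$ are linearly dependent, then every $n$-element subset of the corresponding ground set is dependent in the e-matroid. The same reduction thus gives $\mathsf{VSB}_{<\omega} \leqsW \mathsf{EMB}_{<\omega}$, and composing with $\mathsf{EMB}_{<\omega} \leqsW \Cmax$ from Theorem~\ref{w3} finishes this direction.

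For the downward reduction $\Cmax \leqsW \mathsf{VSB}_{<\omega}$, by Theorem~\ref{w3} it suffices to establish $\mathsf{GAC}_{<\omega} \leqsW \mathsf{VSB}_{<\omega}$. The key idea is that the partition matroid arising from a $\mathsf{GAC}_{<\omega}$ instance is representable over $\mathbb{Q}$. Given an input $(n, G)$ with $G = (V(G), E(G))$ satisfying that every $n$-element subset of $V(G)$ contains a path-connected pair, $G$ has at most $n-1$ connected components. I would form the $\mathbb{Q}$-vector space $V = V_0 / U$, where $V_0$ is the free $\mathbb{Q}$-vector space on generators $\{e_v : v \in V(G)\}$ and $U$ is the subspace spanned by $\{e_u - e_v : (u,v) \in E(G)\}$. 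Then $[e_u] = [e_v]$ in $V$ if and only if $u$ and $v$ are in the same connected component, so $\dim V$ equals the number of connected components of $G$, hence at most $n-1 < n$. Thus $(n, V)$ is a valid $\mathsf{VSB}_{<\omega}$ instance.

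Given any basis $B$ of $V$, I would recover a maximal antichain of $G$ by greedy enumeration: process the vertices $v \in V(G)$ in order, adjoining $v$ to the output set $A$ precisely when the image $[e_v]$ is linearly independent (when expressed in $B$-coordinates) from the images of the $u$'s already in $A$. Since $|B| = \dim V$ equals the number of components of $G$, the resulting $A$ contains exactly one vertex from each component and is therefore a maximal antichain.

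The main obstacle is presenting $V$ as a computable vector space, so that the dependence checks used in the greedy extraction can be carried out effectively. I would handle this using the Simpson-style canonical coset representatives already used in Theorem~\ref{Q14B}: because $U$ is spanned by simple differences of generators, each coset admits a unique minimal representative in the fixed well-ordering of $V_0$, making equality in $V$ decidable. A secondary concern for the strong Weihrauch framework is that the post-processor $\Psi$ must operate on $B$ without independent access to $(n, G)$; this is addressed by labeling the generators of $V$ by the vertices of $G$, so that the data needed to execute the greedy extraction is recoverable from the presentation of $V$ carried along with its basis.
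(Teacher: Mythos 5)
Your first direction is fine and is exactly the paper's: the $\mathsf{VSB} \leqsW \mathsf{EMB}$ reduction of Theorem~\ref{w1} preserves the rank bound, so $\mathsf{VSB}_{<\omega} \leqsW \mathsf{EMB}_{<\omega} \leqsW \Cmax$ by Theorem~\ref{w3}. The problem is in the other direction. Your routing $\Cmax \leqsW \mathsf{GAC}_{<\omega} \leqsW \mathsf{VSB}_{<\omega}$ is legitimate in outline (Theorem~\ref{w3} does give $\Cmax \leqsW \mathsf{GAC}_{<\omega}$), but the proposed instance map $G \mapsto V_0/U$, with $U$ spanned by the edge differences $e_u - e_v$, does not produce a valid $\mathsf{VSB}_{<\omega}$ instance: membership in $U$ is not decidable. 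Indeed $e_u - e_v \in U$ exactly when $u$ and $v$ are joined by a path, so equality in the quotient is $\Sigma^0_1$-complete relative to $G$ --- it is precisely the information the reduction is supposed to extract. The quotient is therefore only a c.e.-presented vector space, not one uniformly computable from $G$, and (as the paper's Theorem~\ref{iso} and its corollary emphasize) one cannot in general pass computably from a c.e.\ presentation to a computable one. Your appeal to ``Simpson-style canonical coset representatives'' does not repair this: in Simpson's construction the subspace $U_0$ has a \emph{decidable} membership criterion because each generator $x_{2f(m)} + (m+1)x_{2f(m)+1}$ sits on its own coordinate pair and carries the scalar tag $m+1$, from which $m$ can be read off and $f(m)$ evaluated; the untagged edge-difference subspace has no such local test, so the minimal coset representatives exist but cannot be computed. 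The same obstruction re-appears in your extraction step, where deciding whether $[e_v]$ is independent of the chosen $[e_u]$'s again amounts to deciding connectivity; and in a \emph{strong} Weihrauch reduction the post-processor sees only the basis, not $G$, so ``carrying the presentation along with its basis'' is not available.

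By contrast, the paper avoids the graph detour entirely and reduces $\Ccmax$ directly to $\mathsf{VSB}_{<\omega}$: it first applies a moving-marker preprocessing (as in Lemma~\ref{Q14A}) to turn the enumeration $f$ into functions $g_j$ each omitting at most one value, and then runs an $n$-fold version of the tagged Simpson construction of Theorem~\ref{Q14B}, so that the subspaces $U_0$ and $U_1$ remain computable, the quotient $V$ has dimension between $n$ and $2n$, and the cardinality of the produced basis together with a finite elimination procedure recovers a maximum-cardinality omitted set. To salvage your approach you would need an analogous tagging device making the relevant subspace computable from $G$, which the plain edge-difference construction does not provide.
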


\begin{proof}
By Theorem~\ref{w3}, $\mathsf{EMB}_{<\omega} \leqsW \Cmax$.  The
proof of $\mathsf{VSB} \leqsW \mathsf{EMB}$ in Theorem~\ref{w1} preserves dimension,
so that argument also witnesses that $\mathsf{VSB}_{<\omega} \leqsW \mathsf{EMB}_{<\omega}$.
By transitivity, $\mathsf{VSB}_{<\omega} \leqsW \Cmax$.

Next we will adapt arguments from the proofs of Lemma~\ref{Q14A} and Theorem~\ref{Q14B}
to show that $\Ccmax \leqsW \mathsf{VSB}_{<\omega}$.  Fix $n$ and
$f\colon  \nat \to [\nat] ^{<\nat}$ such that the range of $f$ includes all sets of cardinality $\ge n$.
For each $j<n$, let $h_j$ be a bijective enumeration of $\{X : X \subset \nat \land j \le |X| <n\} \times \nat$.
Emulating the moving marker construction of Lemma~\ref{Q14A}, for each $j<n$ define $g_j$ such that
either the range of $f$ includes all sets of cardinality $k$ for $j \le k <n$ and $g_j$ is surjective
or the unique value not in the range of $g_j$ is some $m$ such that $h_j (m) = (X_0, m_0 )$
where $j \le |X_0 |< n$ and $X_0$ is in the complement of the range of $f$.
(For use in the proof of Theorem~\ref{RMVW}, note that the convergence of the moving marker
construction can be formally proved using the collection principle $\mathsf{B}\Sigma^0_1$, which
is provable in $\rca$.)

Now we carry out an $n$-fold analog of the vector space construction in the proof of Theorem~\ref{Q14B}. The goal of the construction is to form a space $V$ as a direct sum of subspaces 
$W_i$, $i < n$, such that if $j_0$ is the largest size of a set omitted from the range of~$f$, then the dimension of $W_i$ is $1$ for $i > j_0$ and the dimension is $2$ for $i \leq j_0$.  This will ensure that the dimension of $V$ is finite, and moreover will allow us to compute the value of~$j_0$ if we know the exact dimension of~$V$.

Let $V_0$ be the set of formal sums
$\sum_{(i,k)\in I_k\times[0,n)} q_{(i,k)} x_{(i,k)}$ where for each $k<n$, each $I_k$ is finite
and $0 \neq q_{(i,k)} \in \mathbb Q$.  Identifying $h_j(m)=(X_0,m_0)$ with the integer code for the pair,
for each $k<n$ and each $m$, let
$x^\prime_{(m,k)} = x_{(2h_k (m),k)} + (m+1)x_{(2h_k (m) + 1, k)}$ and
$X^\prime = \{ x^\prime_{(m,k)} : m \in \nat \land k<n\}$.  Let
$U_0$ be the linear span of $X^\prime$ and set $V_1 = V_0 / U_0$.
Let $U_1$ be the linear span in $V_1$ of $\{ x_{(2m,k)} : m \in \nat \land k<n\}$
and let $V = V_1 / U_1$.  Then $V$ has a two dimensional subspace corresponding to
each $j<n$ such that the range of $f$ omits a set of cardinality $k$ with $j \le k <n$,
and a one dimensional subspace corresponding to each $j<n$ such that $f$ maps $\nat$ onto
the sets of cardinality $k$ with $j \le k < n$.  Thus the dimension of $V$ is between $n$ and $2n$, and
any set of $2n+1$ vectors is linearly dependent.

(For use in the proof of Theorem~\ref{RMVW}, note that the claim that any collection of $2n+1$ vectors
of $V$ is linearly dependent can be proved in $\rca$ as follows.  Fix a set of $2n+1$ nonzero vectors,
$S = \{ u_0 , \dots , u_{2n} \}$.  Let $B_0$ be the finite set of those vectors of the form
$x_{(i,k)}$ that appear in the sums representing each $u_i$.  Because $S$ is finite,
$\Sigma^0_1$ induction suffices to find the smallest subset of $B_0$ that spans $S$.
Call this set $B_1$.  By minimality, $B_1$ is linearly independent.  For each $k<n$, the function
$g_k$ omits at most one value, so $B_1$ contains at most two vectors of the form $x_{(i,k)}$.
Thus $|B_1|\le 2n$.  Let $B_1 = \{ v_0 , \dots , v_j \}$ where $j<2n$.  The vectors of $B_1$ span $S$,
so $u_0 = \sum_{i\le j} c_i v_i$, with some $c_{i_0} \neq 0$.  Solving for $v_{i_0}$, we see that
$v_{i_0}$ is in the span of $B_2 = \{ u_0 \} \cup B_1\setminus \{v_{i_0}\}$.  Thus $B_2$ is a linearly
independent set spanning $S$.  Iterating this process by primitive recursion, we eventually find a $u_m \in S$
which is a linear combination of $\{ u_i : i<m\}$.  Thus $S$ is linearly dependent.)

Apply $\mathsf{VSB}_{< \omega}$ to find a basis $B$ for $V$.  Then $k=|B|-n-1$ is the cardinality of the largest
set omitted from the range of $f$.  Let $P$ be the finite collection of odd numbers $m$ such that
$(m,k)$ appears as an index in a formal sum for an element of $B$.  Let $R=\{ 2m+1\in P\}$.
Exactly one $m$ in $R$ does not appear in the range of $g_k$.
Thus for exactly one $m$ in $R$, $\{x_{(0,k)} , x_{(2m+1 , k)} \}$ is linearly independent.
Sequentially examine linear combinations of the form $q_0 x_{(0,k)} + q_1 x_{(2m+1 , k)}$, ejecting
values from $R$ corresponding to vectors equal to $0$ in $V$, until only one is left.
Viewed as a code for a pair, the first component of that value is a code for
a set of maximum cardinality in the complement of the range of $f$.  Thus
$\Ccmax \leqsW \mathsf{VSB}_{<\omega}$.  By Theorem~\ref{w3},
$\Cmax \leqsW \mathsf{VSB}_{<\omega}$.
\end{proof}

\section{Reducibility and Reverse Mathematics}

We conclude by extracting a final reverse mathematics result from the proofs of Theorem~\ref{w3}
and Theorem~\ref{Q17A}, extending the list of equivalences in Theorem~\ref{b2}.

\begin{theorem}[$\rca$]\label{RMVW}
$(\rca)$ The following are equivalent:
\begin{enumlist}
\item \label{RMVW1}  $\mathsf{I}\Sigma^0_2$, the induction scheme for $\Sigma^0_2$ formulas with set parameters.
\item \label{RMVW2}  Let $V$ be a countable vector space such that for some $n$, every
subset of $n$ vectors is linearly dependent.  Then $V$ has a basis.

\item \label{RMVW3} A formalized version of~$\Ccmax$. Suppose $f\colon\nat \to [\nat]^{<\nat}_{\neq \emptyset}$ and
there is an $n$ such that for all $X \in \nat^{<\nat}$,
$[|X| \ge n \to \exists t (f(t) = X) ]$.
Then there is an $X \in [\nat]^{<\nat}$ such that
$(\forall t)[f(t) \neq X$ and for all $Y \in [\nat]^{<\nat}$, $[|X|<|Y| \to \exists t( f(t) = Y)] $.

\item \label{RMVW4} A formalized version of~$\Cmax$. Suppose $f\colon\nat \to [\nat]^{<\nat}_{\neq \emptyset}$ and
there is an $n$ such that for all $X \in \nat^{<\nat}$,
$ (|X| \ge n \to \exists t (f(t) = X) )$.
Then there is an $X \in [\nat]^{<\nat}$ such that
$(\forall t )[f(t) \neq X] $ and for all $Y \in \nat^{<\nat}$, $[X \subsetneq Y \to \exists t( f(t) = Y)]$.
\end{enumlist}
\end{theorem}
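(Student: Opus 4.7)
The plan is to prove the cycle $(\ref{RMVW1}) \Rightarrow (\ref{RMVW3}) \Rightarrow (\ref{RMVW4}) \Rightarrow (\ref{RMVW2}) \Rightarrow (\ref{RMVW1})$, reusing the constructions from Theorems~\ref{w3} and~\ref{Q17A} formalized inside $\rca$, and closing the cycle with the equivalence $(\ref{b2a}) \Leftrightarrow (\ref{b2c})$ already established in Theorem~\ref{b2}.

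For $(\ref{RMVW1}) \Rightarrow (\ref{RMVW3})$, I would introduce the $\Sigma^0_2$ formula $\psi(k) \equiv (\exists X)[|X|=k \land (\forall t)(f(t) \neq X)]$. Since $f$ enumerates only nonempty sets, $\psi(0)$ holds; the hypothesis of $(\ref{RMVW3})$ makes $\psi(k)$ false for $k \ge n$. The $\Sigma^0_2$ bounded maximum principle, a consequence of $\iszt$ as used in Theorem~\ref{b2}, produces the largest $k_0 < n$ for which $\psi(k_0)$ holds, and any witness $X$ is a finite set of maximum cardinality in the complement of the range of~$f$. The implication $(\ref{RMVW3}) \Rightarrow (\ref{RMVW4})$ is then immediate, because any proper superset of such an $X$ has strictly larger cardinality and therefore lies in the range of~$f$ by the hypothesis of $(\ref{RMVW3})$.

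For $(\ref{RMVW4}) \Rightarrow (\ref{RMVW2})$, I would formalize inside $\rca$ the chain $\mathsf{VSB}_{<\omega} \leqsW \mathsf{EMB}_{<\omega} \leqsW \Cmax$ drawn from Theorems~\ref{w1} and~\ref{w3}. Given a countable vector space $V$ coded as a subset of $\nat$ in which every $n$-element subset is linearly dependent, define $f \colon \nat \to [\nat]^{<\nat}_{\neq \emptyset}$ to enumerate each nonempty finite set that is either linearly dependent in $V$ or contains a natural number outside $V$. The rank hypothesis guarantees that every finite set of cardinality at least $n$ appears in the range of $f$, so $(\ref{RMVW4})$ yields a set $X$ not in the range whose proper supersets all are. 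Then $X$ is a linearly independent subset of $V$, and every $X \cup \{v\}$ with $v \in V \setminus X$ is linearly dependent, so $X$ spans $V$ and is a basis.

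The main obstacle is $(\ref{RMVW2}) \Rightarrow (\ref{RMVW1})$, which I would handle in two stages. First, I would formalize the reduction $\Ccmax \leqsW \mathsf{VSB}_{<\omega}$ from Theorem~\ref{Q17A} inside $\rca$; the two parenthetical remarks placed in that proof are precisely the verifications needed, namely that the moving-marker construction converges by the collection principle $\mathsf{B}\Sigma^0_1$ and that the bound on linear independence in the constructed space follows from $\Sigma^0_1$ induction, both provable in $\rca$. This yields $(\ref{RMVW2}) \Rightarrow (\ref{RMVW3})$. Second, I would formalize the direct construction from the proof of $\mathsf{EMB}_{<\omega} \leqsW \Cmax$ in Theorem~\ref{w3} inside $\rca$, thereby deriving statement~$(\ref{b2a})$ of Theorem~\ref{b2}; the equivalence $(\ref{b2a}) \Leftrightarrow (\ref{b2c})$ of that theorem then closes the cycle and delivers $\iszt$. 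The delicate point is to ensure that neither formalization silently invokes arithmetic comprehension or induction beyond the resources of $\rca$, and it is precisely the author-placed hints in Theorem~\ref{Q17A} that make this feasible.
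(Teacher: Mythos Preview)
Your proposal is correct and draws on the same toolkit as the paper---Theorem~\ref{b2} and the formalized Weihrauch reductions from Theorems~\ref{w3} and~\ref{Q17A}---but the cycle is organized differently. The paper proves the tighter loop $(\ref{RMVW1})\Rightarrow(\ref{RMVW2})\Rightarrow(\ref{RMVW3})\Rightarrow(\ref{RMVW4})\Rightarrow(\ref{RMVW1})$: it gets $(\ref{RMVW1})\Rightarrow(\ref{RMVW2})$ by passing through e-matroids and invoking Theorem~\ref{b2}, then $(\ref{RMVW2})\Rightarrow(\ref{RMVW3})$ via the formalized $\Ccmax\leqsW\mathsf{VSB}_{<\omega}$, and $(\ref{RMVW4})\Rightarrow(\ref{RMVW1})$ via the formalized $\mathsf{EMB}_{<\omega}\leqsW\Cmax$ and Theorem~\ref{b2}. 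Your route instead proves $(\ref{RMVW1})\Rightarrow(\ref{RMVW3})$ directly with the $\Sigma^0_2$ maximum principle (essentially re-running the $(\ref{b2c})\Rightarrow(\ref{b2a})$ argument in the $\Ccmax$ setting) and $(\ref{RMVW4})\Rightarrow(\ref{RMVW2})$ by composing the two reductions into a single enumeration~$f$. The cost is a little redundancy: your $(\ref{RMVW2})\Rightarrow(\ref{RMVW1})$ step reproves $(\ref{RMVW2})\Rightarrow(\ref{RMVW3})\Rightarrow(\ref{RMVW4})$ on its way to $(\ref{b2a})$, so your direct $(\ref{RMVW1})\Rightarrow(\ref{RMVW3})$ is not strictly needed once the $\Ccmax\leqsW\mathsf{VSB}_{<\omega}$ formalization is in hand. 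The gain is that each of your implications is self-contained and the direct $(\ref{RMVW1})\Rightarrow(\ref{RMVW3})$ argument is pleasingly short.
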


\begin{proof}
First, we use~\ref{RMVW1} to prove~\ref{RMVW2}.
If $V$ is a vector space and every set of $n$ vectors is linearly dependent, the construction from the proof of Theorem~\ref{w1}
can be formalized to yield an e-matroid of rank no more than~$n$.  By Theorem~\ref{b2}, $\mathsf{I}\Sigma^0_2$ implies
that this matroid has a basis which is also a basis of $V$.

To show that~\ref{RMVW2} implies~\ref{RMVW3}, formalize the argument form the proof of Theorem~\ref{Q17A}
showing that $\Ccmax \leqsW \mathsf{VSB}_{<\omega}$, using the parenthetical comments.
As noted, the convergence of the moving marker construction is provable in $\rca$, as is the claim that
every set of $2n+1$ vectors is linearly dependent.

The proof that~\ref{RMVW3} implies~\ref{RMVW4} follows immediately from the fact that any set that is maximal
in the sense of~\ref{RMVW3} is automatically maximal in the sense of~\ref{RMVW4}.

The proof that $\mathsf{EMB}_{<\omega} \leqsW \Cmax$ from Theorem~\ref{w3}
can be formalized in $\rca$ to show that~\ref{RMVW4} implies item~\ref{b2a} of Theorem~\ref{b2}.  By
Theorem~\ref{b2}, this implies $\mathsf{I}\Sigma^0_2$, completing the proof.
\end{proof}

\bibliographystyle{amsplain}

\begin{bibsection}[Bibliography]
\begin{biblist}[\normalsize]

\bib{bbp2012}{article}{
  AUTHOR =       {Brattka, Vasco},
  author={de Brecht, Matthew},
  author={Pauly, Arno},
  TITLE =        {Closed choice and a uniform low basis theorem},
  JOURNAL =      {Annals of Pure and Applied Logic},
  YEAR =         {2012},
  volume =       {163},
  number =       {8},
  pages =        {968--1008},
  doi = {10.1016/j.apal.2011.12.020}
}

\bib{bg2011}{article}{
   author={Brattka, Vasco},
   author={Gherardi, Guido},
   title={Weihrauch degrees, omniscience principles and weak computability},
   journal={J. Symbolic Logic},
   volume={76},
   year={2011},
   number={1},
   pages={143--176},
   issn={0022-4812},
   review={\MR{2791341 (2012c:03186)}},
   doi={10.2178/jsl/1294170993},
}

\bib{bg2011b}{article}{
   author={Brattka, Vasco},
   author={Gherardi, Guido},
  TITLE =        {Effective choice and boundedness principles in computable analysis},
  JOURNAL =      {Bulletin of Symbolic Logic},
  YEAR =         {2011},
  volume =       {1},
  number =        {1},
  pages =        {73--117},
  doi =          {10.2178/bsl/1294186663},
}

\bib{ddhms2016}{article}{
  author={Dorais, Fran{\c c}ois G.},
  author={Dzhafarov, Damir D.},
  author={Hirst, Jeffry L.},
  author={ Mileti, Joseph R.},
  author={Shafer, Paul},
  title={On uniform relationships between combinatorial problems},
  journal={Trans.\ AMS},
  year={2014},
  volume={368},
  pages={1321--1359},
  doi={10.1090/tran/6465},
}

\bib{crossley}{article}{
   author={Crossley, J. N.},
   author={Remmel, J. B.},
   title={Undecidability and recursive equivalence. II},
   conference={
      title={Computation and proof theory},
      address={Aachen},
      date={1983},
   },
   book={
      series={Lecture Notes in Math.},
      volume={1104},
      publisher={Springer, Berlin},
   },
   date={1984},
   pages={79--100},
   review={\MR{775710}},
   doi={10.1007/BFb0099480},
}

\bib{dthesis}{thesis}{
  author={Downey, R},
  title={Abstract dependence and recursion theory and the lattice of recursively enumerable filters},
  type={Ph.D.~Thesis},
  organization={Monash University},
  address={Victoria, Australia},
  date={1982},
}

\bib{dbams}{article}{
   author={Downey, R.},
   title={Abstract dependence, recursion theory, and the lattice of recursively enumerable filters},
   journal={Bull. Austral. Math. Soc.},
   volume={27},
   date={1983},
   pages={461--464},
 }

\bib{dnws}{article}{
   author={Downey, R.},
   title={Nowhere simplicity in matroids},
   journal={J. Austral. Math. Soc. Ser. A},
   volume={35},
   date={1983},
   number={1},
   pages={28--45},
   issn={0263-6115},
   review={\MR{697655}},
}

\bib{fss}{article}{
   author={Friedman, Harvey M.},
   author={Simpson, Stephen G.},
   author={Smith, Rick L.},
   title={Countable algebra and set existence axioms},
   journal={Ann. Pure Appl. Logic},
   volume={25},
   date={1983},
   number={2},
   pages={141--181},
   issn={0168-0072},
   review={\MR{725732}},
   doi={10.1016/0168-0072(83)90012-X},
}

\bib{MR790391}{article}{
   author={Friedman, Harvey M.},
   author={Simpson, Stephen G.},
   author={Smith, Rick L.},
   title={Addendum to: ``Countable algebra and set existence axioms''},
   journal={Ann. Pure Appl. Logic},
   volume={28},
   date={1985},
   number={3},
   pages={319--320},
   issn={0168-0072},
   review={\MR{790391}},
   doi={10.1016/0168-0072(85)90020-X},
}

\bib{gura}{article}{
   author={Gura, Kirill},
   author={Hirst, Jeffry L.},
   author={Mummert, Carl},
   title={On the existence of a connected component of a graph},
   journal={Computability},
   volume={4},
   date={2015},
   number={2},
   pages={103--117},
   issn={2211-3568},
   review={\MR{3393974}},
}

\bib{hirst1992}{article}{
   author={Hirst, Jeffry L.},
   title={Connected components of graphs and reverse mathematics},
   journal={Arch. Math. Logic},
   volume={31},
   date={1992},
   number={3},
   pages={183--192},
   issn={0933-5846},
   review={\MR{1147740}},
   doi={\href{http://dx.doi.org/10.1007/BF01269946}{10.1007/BF01269946}},
}

\bib{metakides}{article}{
   author={Metakides, G.},
   author={Nerode, A.},
   title={Recursively enumerable vector spaces},
   journal={Ann. Math. Logic},
   volume={11},
   date={1977},
   number={2},
   pages={147--171},
   issn={0168-0072},
   review={\MR{0446936}},
}

\bib{mnjoa}{article}{
   author={Metakides, G.},
   author={Nerode, A.},
   title={Recursion theory on fields and abstract dependence},
   journal={J. Algebra},
   volume={65},
   date={1980},
   number={1},
   pages={36--59},
   issn={0021-8693},
   review={\MR{578794}},
   doi={10.1016/0021-8693(80)90237-9},
}

\bib{nrpatras}{article}{
   author={Nerode, A.},
   author={Remmel, J.},
   title={Recursion theory on matroids},
   conference={
      title={Patras Logic Symposion},
      address={Patras},
      date={1980},
   },
   book={
      series={Stud. Logic Found. Math.},
      volume={109},
      publisher={North-Holland, Amsterdam},
   },
   date={1982},
   pages={41--65},
   review={\MR{694252}},
   doi={10.1016/S0049-237X(08)71356-9},
}

\bib{simpson}{book}{
   author={Simpson, Stephen G.},
   title={Subsystems of second order arithmetic},
   series={Perspectives in Logic},
   edition={2},
   publisher={Cambridge University Press, Cambridge; Association for
   Symbolic Logic, Poughkeepsie, NY},
   date={2009},
   pages={xvi+444},
   isbn={978-0-521-88439-6},
   review={\MR{2517689 (2010e:03073)}},
   doi={10.1017/CBO9780511581007},
}

\bib{whitney}{article}{
   author={Whitney, Hassler},
   title={On the Abstract Properties of Linear Dependence},
   journal={Amer. J. Math.},
   volume={57},
   date={1935},
   number={3},
   pages={509--533},
   issn={0002-9327},
   review={\MR{1507091}},
   doi={10.2307/2371182},
}

\end{biblist}
\end{bibsection}

\end{document}